\documentclass[10pt]{article}

\usepackage{amsmath}
\usepackage{amssymb}
\usepackage{amsfonts}
\usepackage{amsthm}
\usepackage{latexsym}

\setlength\textwidth{160mm}
\setlength\textheight{240mm}
\addtolength\oddsidemargin{-21mm}
\addtolength\topmargin{-28mm}
\addtolength\footskip{5mm}

\usepackage[dvips]{graphicx}  
\usepackage{color}
\usepackage{authblk}

\newtheorem{theorem}{Theorem}
\newtheorem{lem}[theorem]{Lemma} 

\newtheorem{prop}[theorem]{Proposition}

\newtheorem*{thmm}{Theorem}

\newtheorem*{problem}{Problem}

\theoremstyle{definition}
\newtheorem{claim}{Claim}
\theoremstyle{remark}

\newtheorem{cProof}{Proof of Claim}

\newcommand{\N}{\mathrm{N}}

\begin{document}

\title{
\textbf{2-connected equimatchable graphs on surfaces } }


\author{Eduard Eiben\thanks{eiben1@uniba.sk} } 
\author{Michal Kotrb\v{c}\'ik\thanks{kotrbcik@dcs.fmph.uniba.sk}}
\affil{Department of Computer Science\\ Faculty of Mathematics, Physics
and Informatics\\ Comenius University\\ 842 48 Bratislava,
Slovakia}

\maketitle

\begin{abstract}
A graph $G$ is equimatchable if  any matching in $G$ is a subset of a maximum-size matching.
It is known that any $2$-connected equimatchable graph is either bipartite or factor-critical. 
We prove that 
for any vertex $v$ of a $2$-connected factor-critical equimatchable graph $G$ and a minimal matching $M$ that isolates $v$ the graph $G\setminus(M\cup\{ v\})$ is either $K_{2n}$ or $K_{n,n}$ for some $n$.
We use this result to improve the upper bounds on the maximum size of $2$-connected equimatchable factor-critical graphs embeddable in the orientable surface of genus $g$ to 
 $4\sqrt g+17$ if $g\le 2$
 and to
 $12\sqrt g+5$ if $g\ge 3$.
Moreover, for any nonnegative integer $g$ we construct a $2$-connected equimatchable factor-critical graph with genus $g$ and more than $4\sqrt{2g}$ vertices, which 
establishes that the maximum size of such graphs is $\Theta(\sqrt g)$. Similar bounds are obtained also for nonorientable surfaces.
Finally,  for any nonnegative integers $g$, $h$ and $k$  we provide a construction of arbitrarily large $2$-connected equimatchable bipartite graphs with orientable genus $g$, respectively nonorientable genus $h$, and a genus embedding with face-width $k$.
\end{abstract}

\noindent {\bf Keywords:} graph, matching, equimatchable, factor-critical,  embedding, genus, bipartite.\\
{\bf MSC2010:}  05C70, 05C10, 05C35.

\section{Introduction}
A graph $G$ is  called equimatchable if any  maximal matching of $G$ has maximum size. 
Equimatchable graphs constitute a classical topic of matching theory investigated for several decades since 
appearing in \cite{grunbaum}, \cite{lewin}, and \cite{meng}.
In particular, Gr\"unbaum  \cite{grunbaum} asked for a characterisation of all equimatchable graphs.
The first step in this direction was a characterisation of all 
randomly-matchable graphs --
 equimatchable graphs with a perfect matching. By a result of Sumner \cite{sumner}, connected randomly-matchable graphs are exactly the complete graphs $K_{2n}$ and complete bipartite graphs $K_{n,n}$ for $n\ge 1$.
The fundamental work \cite{LPP} provides a structural characterisation of equimatchable graphs without a perfect matching using Gallai-Edmonds decomposition, yielding also a polynomial-time algorithm for recognizing equimatchable graphs. Equimatchable factor-critical graphs with a cut-vertex are investigated in \cite{favaron:1986} where it is proved that they contain exactly one cut-vertex $v$, every component of $G-v$ is either $K_{2n}$ or $K_{n,n}$, and $v$ is adjacent to at least two adjacent vertices of every component of $G-v$. A similar description of $2$-connected equimatchable factor-critical graphs with respect to a $2$-cut $\{u,v\}$ is given as Theorem 2.2 of \cite{favaron:1986}: $G\setminus\{u,v\}$ has exactly two components which differ from a complete or complete bipartite graph by at most one edge or by at most two vertices. 
Furthermore, it is proved in \cite{KPS} that if $G$ is a $3$-connected planar graph, $v$ a vertex of $G$, and $M$ a minimal matching isolating $v$, then $G\setminus(V(M)\cup\{v\})$ is randomly matchable and connected, where a matching $M$ is \emph{isolating} a vertex $v$ if $\{v\}$ is a component of $G\setminus V(M_v)$.
It can be easily seen that every component of $G\setminus V(M_v)$ except $\{v\}$ is randomly matchable for every factor-critical graph $G$ and a minimal matching $M_v$ isolating a vertex $v$ of $G$.
Our main theorem below extends these results by showing that $G\setminus(V(M)\cup\{v\})$ has exactly one component.

\begin{thmm}
Let $G$ be a 2-connected, factor-critical equimatchable graph. Let $v$ be a vertex of $G$ and $M_v$ a minimal matching isolating $v$. 
Then $G\setminus(V(M_v)\cup \{v\})$ is isomorphic with $K_{2n}$ or $K_{n,n}$ for some nonnegative integer $n$.
\end{thmm}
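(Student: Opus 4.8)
The plan is to argue by contradiction using the following reformulation of equimatchability. Since $G$ is factor-critical, a matching is maximum exactly when it misses a single vertex; hence $G$ fails to be equimatchable as soon as it admits an \emph{independent} set $U$ with $|U|\ge 3$ for which $G-U$ has a perfect matching, because any perfect matching of $G-U$ is then a maximal matching (no edge inside the independent $U$ can be added) missing $|U|\ge 3>1$ vertices. By the remark preceding the theorem every component of $G\setminus V(M_v)$ other than $\{v\}$ is randomly matchable, hence of the form $K_{2n}$ or $K_{n,n}$; it therefore suffices to prove that there is exactly one such component. Suppose instead that $G\setminus V(M_v)$ has two nontrivial components $C_1$ and $C_2$.

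Fix any $a\in C_1$ and $b\in C_2$. Because $N(v)\subseteq V(M_v)$ we have $v\not\sim a$ and $v\not\sim b$, and since $a,b$ lie in different components of $G\setminus V(M_v)$ they are nonadjacent; thus $\{v,a,b\}$ is independent for every such choice. By the reformulation it then suffices to exhibit a single choice of $a,b$ for which $G-\{v,a,b\}$ has a perfect matching. I would start from the perfect matching $N=M_v\cup\bigcup_i N_i$ of $G-v$, where $N_i$ is a perfect matching of the (even, randomly matchable) component $C_i$. The standard device is to produce an $N$-alternating path $P$ joining a vertex of $C_1$ to a vertex of $C_2$ whose two end-edges lie in $N$: then $N\mathbin{\triangle}E(P)$ is a perfect matching of $G-v$ missing exactly the two endpoints of $P$, i.e.\ a perfect matching of $G-\{v,a,b\}$ once these endpoints are named $a,b$. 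The whole problem thus reduces to finding such a path between the two components, and the complete (resp.\ complete-bipartite) structure of $C_1,C_2$ gives full freedom to choose the exposed endpoints and their $N$-partners on either side (in the bipartite case, on prescribed sides).

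The crux, and the step I expect to be the main obstacle, is the existence of this alternating path. Connectivity of $G-v$ (which does follow from $2$-connectivity of $G$) is \emph{not} by itself sufficient: if $G-v$ has a cut-vertex $u$, equivalently if $\{u,v\}$ is a $2$-cut of $G$, then an $N$-alternating path forced through $u$ may stall, since after entering $u$ along a non-matching edge it must leave along the unique $N$-edge at $u$ and can be trapped. Overcoming this requires the remaining structure, not bare connectivity: each $C_i$ is attached to $V(M_v)$ at \emph{at least two} vertices (else the single attachment vertex would be a cut-vertex of $G$), the minimality of $M_v$ forces every edge of $M_v$ to meet $N(v)$, and the components are randomly matchable. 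I would first dispose of the easy case in which some edge of $M_v$ has one endpoint adjacent to $C_1$ and the other adjacent to $C_2$: then the path $a\to a'\to w\to w'\to b'\to b$, with $a'w,\,b'w'$ component–cut edges and $ww'\in M_v$, already works. In the general case I would route an $M_v$-alternating path through $V(M_v)$ from the neighbourhood of $C_1$ to that of $C_2$, using the two independent attachments of each component (via Menger, or an ear decomposition of a matching-covered subgraph) to steer around any obstructing $2$-cut; here Favaron's description of the two parts produced by a $2$-cut of such graphs is the natural tool for controlling the local structure at a stalling vertex.

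Granting the alternating path, $G-\{v,a,b\}$ has a perfect matching for a suitable $a\in C_1$ and $b\in C_2$, contradicting equimatchability via the reformulation; hence $G\setminus V(M_v)$ has exactly one nontrivial component, necessarily $K_{2n}$ or $K_{n,n}$, which is the assertion. The entire weight of the argument therefore rests on the routing step, namely on showing that $2$-connectivity together with the randomly-matchable attachments always permits a matched-end $N$-alternating path between any two nontrivial components.
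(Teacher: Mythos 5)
Your reformulation of equimatchability (an independent set $U$ with $|U|\ge 3$ whose removal leaves a perfectly matchable graph kills equimatchability in a factor-critical graph), the reduction to showing that $G\setminus(V(M_v)\cup\{v\})$ has one component, and the equivalence between ``$G-\{v,a,b\}$ has a perfect matching'' and the existence of a matched-end $N$-alternating path from $a$ to $b$ are all correct, and your ``easy case'' is exactly the first claim of the paper's proof. But the step you yourself identify as the crux --- that $2$-connectivity, minimality of $M_v$, and the randomly-matchable attachments always yield such a path between two components --- is not merely unproven; as you have formulated it, it is false. Consider the graph $H$ with vertices $v,x_1,y_1,x_2,y_2,a_1,a_2,b_1,b_2$ and edges $x_1y_1,\,x_2y_2$ (the matching $M_v$), $vy_1,\,vy_2$, $a_1a_2,\,b_1b_2$, $x_1a_1,\,x_1b_1,\,x_2a_2,\,x_2b_2$. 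One checks directly that $H$ is $2$-connected and factor-critical, that $M_v$ is a minimal matching isolating $v$, and that $H\setminus(V(M_v)\cup\{v\})$ has the two components $C_1=\{a_1,a_2\}\cong K_2$ and $C_2=\{b_1,b_2\}\cong K_2$, both randomly matchable, each attached to two vertices of $V(M_v)$, with every edge of $M_v$ meeting $\N(v)$. Yet for every $a\in C_1$ and $b\in C_2$ the graph $H-\{v,a,b\}$ has no perfect matching: $y_1$ is matchable only to $x_1$ and $y_2$ only to $x_2$, so any perfect matching must use $x_1y_1$ and $x_2y_2$, after which the surviving vertex of $C_1$ and of $C_2$ are isolated. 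Hence no matched-end $N$-alternating path between $C_1$ and $C_2$ exists, although every structural fact you propose to use holds in $H$.

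Of course $H$ is not equimatchable --- the maximal matching $\{x_1b_1,\,x_2a_2,\,vy_2\}$ exposes the independent set $\{y_1,a_1,b_2\}$ --- but note that this witness exposes a vertex of $V(M_v)$, which lies outside your framework of exposing exactly $\{v,a,b\}$ with $a,b$ in the two components. This is precisely the gap: to rule out a second component one must construct violating maximal matchings whose exposed sets may include endvertices of $M_v$, and which such matchings are available depends on a case analysis of how the edges of $M_v$ attach to the components, to $v$, and to one another. That case analysis is the actual content of the paper's proof (Claims 1--6, each producing a different violating matching, assembled in Claim 7 to show that $v$ would be a cutvertex, contradicting $2$-connectivity). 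Any repair of your argument must enlarge the family of exposed sets in the same way, and at that point it essentially reproduces the paper's proof rather than shortcutting it via Menger, ear decompositions, or Favaron's $2$-cut theorem.
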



The relationship between embeddings of graphs and matching extensions was extensively studied, see for instance \cite{dean}, \cite{KNMS}, or \cite{LZ:2012}. 
The characterisation of equimatchable graphs in \cite{LPP}
implies that any $2$-connected equimatchable graph is either bipartite or factor-critical. A bipartite graph cannot be factor-critical, since otherwise it would have an odd number of vertices and removing a vertex from the smaller partite set cannot yield a graph with a perfect matching. Therefore, these two classes are disjoint.
All $3$-connected planar equimatchable graphs are characterised in \cite{KPS} -- there are 23 such graphs and none of them is bipartite.
Let $G$ be a $3$-connected equimatchable graph with an embedding $\Pi$ in the surface of genus $g$.
In \cite{KP} it is proved that if $G$ is either factor-critical, or bipartite and $\Pi$ has face-width at least 3, then the number of vertices of $G$ is bounded from above by $c\cdot g^{3/2}$ for some constant $c$. The proof  uses the fact that there is no such bipartite graph at all and proceeds to restrict the size of equimatchable factor-critical graphs embeddable in a fixed surface. 
First it is shown that if a
$3$-connected graph has many vertices (a number linear in the genus of the graph), then it   has
a vertex $v$ isolated by a matching $M_v$ of size at most $4$.
The proof is finished by showing that $G\setminus(V(M_v)\cup \{v\})$
 has at most $\binom{8}{3}(4g+3)$ components.

To bound the maximum size of  equimatchable factor-critical graphs embeddable in a fixed surface, we employ a slightly different strategy: while we allow larger isolating matchings, we use a more precise description of $G\setminus(V(M_v)\cup \{v\})$
given in our main result, Theorem \ref{thm:main}, which implies that it has at most one component. 
As a complete or complete bipartite graph embeddable in the surface of genus $g$ has at most $O(\sqrt g)$ vertices, it suffices to bound the size of isolating matchings. Note that any vertex of degree $d$ admits an isolating matching of size at most $d$. 
The last ingredient of our proof is Lemma \ref{lem:minimalDegree}
showing that either the total number of vertices of the graph, or the minimum degree, is sufficiently small.

Concerning the methods of the paper, while we repeatedly  use the characterisation of randomly matchable graphs from \cite{sumner}, the Gallai-Edmonds decomposition is not used 
 beyond the fact that every $2$-connected equimatchable graph is either bipartite or factor-critical. The constants in the orientable and the nonorientable case are different, hence we state our results explicitly for both cases. However, most of the proofs are virtually identical and in such cases, we omit the proof of the nonorientable case.

The paper is organized as follows. In Section 2 we briefly collect the necessary terms, definitions, and notation regarding matchings and embeddings. In Section 3 we present a proof of our main result
stating that the graph  $G\setminus(V(M_v)\cup \{v\})$ is connected for any 
 $2$-connected factor-critical equimatchable graph $G$ and a 
minimal matching $M_v$ isolating a vertex $v$.
Section 4 is devoted to lower and upper bounds on the maximum size of an equimatchable graph embeddable in a fixed surface.

\section{Preliminaries}
All graphs considered in this paper are finite, simple, and undirected.
A matching is a set of independent edges, that is, a set of edges with no endvertices in common. A matching $M$ of a graph $G$ is called \emph{perfect} if every vertex of $G$ is incident with an edge of $M$. 
A graph $G$ is \emph{factor-critical} if $G\setminus\{v\}$ has a perfect matching for any vertex $v$ of $G$ and
 \emph{equimatchable} if any its maximal matching is maximum.
A graph is called \emph{randomly matchable} if it is equimatchable and has a perfect matching. By \cite{sumner}, the connected randomly-matchable graphs are exactly the  even complete graphs $K_{2n}$ and complete regular bipartite graphs $K_{n,n}$ for all $n\ge 1$.
For a matching $M$, by $|M|$ we denote the size of $M$, that is, the number of edges of $M$, and 
by $V(M)$ we denote the set of vertices incident with the  edges of $M$. A vertex is called \emph{covered} by a matching $M$ if it is incident with an edge of $M$, or equivalently, if it lies in $V(M)$.
For a vertex $v$, a matching $M$ is called a \emph{matching isolating $v$} if $\{v\}$ is a component of $G\setminus V(M)$.
A matching $M$ isolating a vertex $v$ is called \emph{minimal} if no subset of $M$ isolates $v$.
For a graph $G$ and its vertex $v$, the set difference $G\setminus\{v\}$ is for brevity denoted by $G-v$.
By $G\cup H$ we denote the disjoint union of graphs $G$ and $H$. 
An edge with endvertices $u$ and $v$ is denoted by $uv$. By $\delta(G)$ we denote the minimum degree of $G$
and by $\N(v)$ the set of neighbours of a vertex $v$.
 For a deeper account of matching theory the reader is referred to \cite{LP}.

A \emph{surface} is a connected $2$-dimensional manifold without boundary. 
The sphere with $g$ handles (respectively $h$ crosscaps) attached forms a model for orientable surfaces of genus $g$  (nonorientable surfaces of genus $h$) and is denoted by $S_g$ ($N_h$). 
Indeed, the classification theorem for orientable (nonorientable) surfaces states that for any orientable (nonorientable) surface there is exactly one $g\ge 0$ such that $S$ is homeomorphic with $S_g$ 
(exactly one $h\ge 1$ such that $S$ is homeomorphic with $N_h$), see \cite{GT}. The number $g$ (respectively $h$) is called the \emph{orientable (nonorientable) genus} of the surface.
For instance, $S_0$ is the sphere, $S_1$ is the torus, and $N_1$ is the projective plane. 
The \emph{characteristic} of a surface $S$, denoted by $\chi(S)$, equals $2-2g$ if $S$ is homeomorphic with $S_g$, or $2-h$ if $S$ is homeomorphic with $N_h$.
An \emph{embedding} of a graph $G$ in a surface $S$ is a representation of $G$ on $S$ with the following properties. The vertices of $G$ are represented by distinct points of $S$,
the edges of $G$ are represented by disjoint images of the open unit interval, and any open neighbourhood of the image of a vertex intersects images of all edges incident with that vertex, see \cite{GT} or \cite{white:2001} for more details.
An embedding of a graph in a surface is called \emph{cellular} (or 2-cell) if every face of the embedding is homeomorphic with an open disc; we consider only cellular embeddings.
The Euler-Poincar\'e formula (see \cite{GT} or \cite{white:2001}) states that if a graph $G$ with $p$ vertices and $q$ edges is cellularly embedded in a surface $S$  with $r$ faces,
then $p-q+r = \chi(S)$. The \emph{orientable (nonorientable)} \emph{genus} of a graph $G$ is the minimum orientable (nonorientable) genus of a surface into which $G$ can be cellularly embedded and is denoted by $\gamma(G)$, respectively $\tilde \gamma(G)$.
\emph{Face-width}, sometimes called also \emph{representativity} or \emph{planar-width}, of an embedding $\Pi$ in a surface $S$ is the minimum number of faces of $\Pi$ whose union contains a noncontractible cycle in the surface $S$. Several equivalent definitions and further details about face-width can be found in \cite{MT}.

We now present a well-known upper bound on the number of faces of an embedding of a simple graph.
\begin{prop}
\label{prop:2q3r}
Let $G$ be a simple graph with $q$ edges embedded with $r$ faces. Then $2q\ge 3r$.
\end{prop}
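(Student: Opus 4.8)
The plan is to bound $r$ through a double count of edge--face incidences. Since the embedding is cellular, every face is homeomorphic to an open disc, and hence its boundary is a closed walk in $G$; let $\ell(f)$ denote the length of this walk, that is, the number of edge-sides traversed along the boundary of the face $f$. Each edge of $G$ borders exactly two face-sides (possibly the same face on both sides), so when we sum $\ell(f)$ over all $r$ faces each edge is counted exactly twice:
\[
\sum_{f}\ell(f)=2q .
\]

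The crux of the argument is the inequality $\ell(f)\ge 3$ for every face $f$, and this is where simplicity enters. A boundary walk of length $1$ would traverse a single edge and return to its starting vertex, which is only possible for a loop; a boundary walk of length $2$ would either use two distinct edges joining the same pair of vertices, i.e.\ a pair of parallel edges, or traverse one edge twice, which forces both endpoints of that edge to have degree one and hence its component to be a single edge. Since $G$ is simple, the first two configurations (a loop, or parallel edges) cannot occur; and since the graphs to which the bound is applied are $2$-connected, and in particular have minimum degree at least two, the last configuration cannot occur either. Thus $\ell(f)\ge 3$ for every $f$.

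Combining the two observations gives
\[
2q=\sum_{f}\ell(f)\ge 3r ,
\]
which is the desired inequality. I expect the only genuine obstacle to be the justification of $\ell(f)\ge 3$: excluding faces of length $1$ is immediate from the absence of loops, while excluding faces of length $2$ requires both the absence of multiple edges and the short observation that a single edge traversed twice around a disc forces its component to be a lone edge. The degenerate graphs left over by this last point are precisely a single vertex or a single edge, which are exactly the configurations for which the stated inequality would otherwise fail; they are harmless in the present paper because the proposition is applied only to $2$-connected graphs, where $\delta(G)\ge 2$ rules them out.
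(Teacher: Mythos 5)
Your proof is correct and follows essentially the same route as the paper's: double-count edge--face incidences so that $\sum_f \ell(f) = 2q$, and bound every face length below by $3$ using simplicity. The one difference is in your favour --- you correctly observe that simplicity alone does not exclude a length-$2$ face arising from a single edge traversed twice (so the statement literally fails for $K_1$ and $K_2$ in the sphere), a degenerate case the paper's one-line proof silently ignores and which, as you note, is harmless in all of the paper's applications since there the graphs have minimum degree at least $2$.
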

\begin{proof}
As $G$ is simple, any face of the embedding has length at least $3$. The result follows from the fact that the union of face boundaries contains every edge precisely twice.
\end{proof}

We repeatedly use the following result due to Ringel and Youngs and Ringel.
\begin{theorem}[\cite{RY:1968, ringel:1959, ringel:1965, ringel:1965b}] 
\label{thm:genus-complete}
The orientable and nonorientable genera of complete and complete bipartite graphs are given by the following formulae:
\begin{eqnarray*}
\gamma(K_n) &=& \left\lceil \frac{(n-3)(n-4)}{12}\right\rceil, n\ge 3; \qquad 
\tilde\gamma(K_n) = \left\lceil \frac{(n-3)(n-4)}{6}\right\rceil, n\ge 3 {\textrm{ \ and\ }} n\neq 7,\  \tilde\gamma(K_7) = 3; 
\\
\rule{0pt}{8mm}\gamma(K_{m,n}) &=& \left\lceil \frac{(m-2)(n-2)}{4}\right\rceil, m,n\ge 2; 
\qquad \tilde\gamma(K_{n,m}) = \left\lceil \frac{(m-2)(n-2)}{2}\right\rceil, m,n\ge 2.
\end{eqnarray*}
\end{theorem}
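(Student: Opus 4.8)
The plan is to separate each of the four identities into a lower bound, which follows routinely from Euler's formula, and a matching upper bound, which must be realized by an explicit embedding and constitutes the genuinely hard part.

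For the lower bounds I would combine the Euler--Poincar\'e relation $p-q+r=\chi(S)$ with the edge--face inequalities. For $K_n$ one has $p=n$ and $q=\binom{n}{2}$, and Proposition~\ref{prop:2q3r} gives $r\le \tfrac{2}{3}q$. Eliminating $r$ and solving for the genus yields, after simplifying $2-n+\tfrac{1}{3}\binom{n}{2}=\tfrac{(n-3)(n-4)}{6}$, the bounds $2g\ge (n-3)(n-4)/6$ in the orientable case and $h\ge (n-3)(n-4)/6$ in the nonorientable case; since the genus is a nonnegative integer, the stated ceilings follow. For $K_{m,n}$ the girth is $4$, so every face has length at least $4$ and hence $r\le\tfrac{1}{2}q$; with $p=m+n$ and $q=mn$ the same computation gives $2g\ge (m-2)(n-2)/2$ and $h\ge(m-2)(n-2)/2$, again producing the stated ceilings.

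The upper bounds require, for each family and each congruence class of $n$ (respectively of $m$ and $n$), a cellular embedding attaining the lower bound. The principal tool is the theory of current graphs (equivalently voltage graphs): a small graph labelled by a cyclic group $\mathbb{Z}_N$, equipped with a rotation, encodes a highly symmetric embedding of a covering graph, and one arranges the data so that the covering graph is the desired complete or complete bipartite graph and every face is a triangle (for $K_n$) or a quadrilateral (for $K_{m,n}$). Because an exact triangulation forces $3\mid q$ together with divisibility of $(n-3)(n-4)$ by $12$, such clean constructions exist only for certain residues of $n$ modulo $12$ (modulo $6$ nonorientably); the remaining residues are handled by near-triangulations with a few larger faces or a single extra handle or crosscap, and each residue class needs its own current graph.

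The main obstacle is exactly this construction step: producing, uniformly across all residue classes, current graphs whose derived embeddings are simple, cellular, and (near-)triangular or (near-)quadrangular, and verifying that each case closes up to the right graph. Several residue classes are genuinely delicate, and the sporadic value $\tilde\gamma(K_7)=3$ shows that the clean formula can fail: the Euler bound only gives $\tilde\gamma(K_7)\ge 2$, but $K_7$ admits no triangulation of the Klein bottle $N_2$, so an additional crosscap is forced. Since the complete treatment of these constructions is precisely the Ringel--Youngs solution of the Heawood problem, I would invoke their results for the upper bounds and record only the Euler-formula lower bounds in full detail.
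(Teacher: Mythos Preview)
Your outline is a correct sketch of how the genus formulae are actually established in the cited works: Euler--Poincar\'e with the appropriate girth bound for the lower bounds, and current/voltage-graph constructions case-by-case over residue classes for the upper bounds, including the Franklin exception at $K_7$. However, the paper does not prove this theorem at all; it is quoted as a known result from \cite{RY:1968, ringel:1959, ringel:1965, ringel:1965b} and used as a black box in Lemma~\ref{lem:maxComponent} and Theorem~\ref{thm:lowerBound}. So there is no ``paper's own proof'' to compare against---your proposal goes well beyond what the paper does, which is simply to cite the result.
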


For a more detailed treatment of  topological graph theory the reader is referred to \cite{GT} or \cite{white:2001}.

\section{Isolating matchings 2-connected equimatchable graphs}

This section is devoted to the proof of our main result stated as Theorem \ref{thm:main}.
We start with two lemmas concerning isolating matchings.

\begin{lem}\label{lem:isolation}
Let $G$ be a factor-critical graph. For every vertex $v$ of $G$ there is a matching $M_v \subseteq E(G)$ 
isolating $v$ 
such that $|M_v|\leq \mathrm{deg}(v)$. 

\end{lem}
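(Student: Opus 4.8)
The plan is to exploit factor-criticality directly: since $G$ is factor-critical, $G-v$ has a perfect matching $N$, and this matching, viewed as a set of edges of $G$, already isolates $v$ because it covers every vertex of $G-v$ (in particular every neighbour of $v$) while leaving $v$ uncovered. The whole matching $N$ is typically far too large, so the real content of the lemma is to prune $N$ down to a subset of the required size without destroying the isolation property.

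Concretely, I would define $M_v$ to be the set of those edges of $N$ that are incident with at least one neighbour of $v$; that is, $M_v = \{e \in N : e \text{ covers some vertex of } \N(v)\}$. First I would check that $M_v$ isolates $v$. Since $M_v \subseteq N$ and $N$ is a matching of $G-v$, the vertex $v$ lies in no edge of $M_v$, so $v \notin V(M_v)$. On the other hand, every vertex $u \in \N(v)$ is covered by exactly one edge of $N$ (as $N$ is perfect on $G-v$), and by construction that edge belongs to $M_v$, so $\N(v) \subseteq V(M_v)$. Consequently $v$ has no neighbour in $G\setminus V(M_v)$, which is precisely the statement that $\{v\}$ is a component of $G\setminus V(M_v)$, i.e. $M_v$ isolates $v$.

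For the size bound I would observe that the map sending each neighbour of $v$ to the unique edge of $N$ covering it is a surjection from $\N(v)$ onto $M_v$: every edge of $M_v$ covers at least one neighbour by definition, and distinct edges of a matching cover disjoint vertex sets. Hence $|M_v| \le |\N(v)| = \mathrm{deg}(v)$, as required. There is no genuine obstacle here; the only point requiring care is to confirm the definition of isolation is met in both directions (that $v$ stays uncovered while all of $\N(v)$ is covered), and that selecting by incidence with $\N(v)$ yields a matching (which is immediate, being a subset of the matching $N$). Note also that the lemma does not demand minimality of $M_v$, so no further reduction is needed.
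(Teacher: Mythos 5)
Your proposal is correct and follows exactly the paper's argument: take a perfect matching of $G-v$ (which exists by factor-criticality) and keep precisely those edges incident with a neighbour of $v$, which isolates $v$ and has at most $\mathrm{deg}(v)$ edges. Your write-up merely spells out the verification steps (that $v$ remains uncovered, that $\N(v)$ is covered, and the surjection giving the size bound) that the paper leaves implicit.
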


\begin{proof}
Since $G$ is factor critical, the graph $G' = G-v$ has a perfect matching $M'$. 
Clearly, every neighbour of $v$ is incident with exactly one edge of the matching $M'$. 
Consider a set $M\subseteq M'$ such that $M$ contains precisely those edges from $M'$ that are incident with at least one neighbour of $v$. Then $M$ is the desired matching $M_v$ containing at most $\mathrm{deg}(v)$ edges and isolating $v$.
\end{proof}

Favaron \cite[Theorem 1.1]{favaron:1986} proved that  any connected factor-critical equimatchable graph $G$ with a cut-vertex contains precisely one cut-vertex $v$ and every component of $G-v$ is either $K_{2n}$ or $K_{n,n}$. For equimatchable factor-critical graphs with a $2$-cut $\{u,v\}$, it is still possible to give a description of the structure of $G' =G\setminus\{u,v\}$, albeit it is more complicated: $G'$ has exactly two components and these components are almost complete or complete bipartite, see \cite[Theorem 2.2]{favaron:1986} for  the precise statement 
and details. 
Removing isolating matchings instead of vertex-cuts allows us to obtain a similar description for graphs with arbitrary connectivity in the lemma below.
The underlying idea of its proof is well known, in particular, it was applied in \cite{KPS} and \cite{KP} to prove more specific variants of the result.

\begin{lem}
\label{lemma:Components}
Let $G$ be a connected factor-critical equimatchable graph and $M$   a minimal matching isolating $v$.
Then every component of 
$G\setminus V(M)$ except $\{v\}$ is isomorphic with either $K_{2n}$ or $K_{n,n}$ for some integer $n$.
\end{lem}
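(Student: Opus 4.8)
The goal is to show that each component $C$ of $G\setminus V(M)$ other than $\{v\}$ is one of the randomly-matchable graphs $K_{2n}$ or $K_{n,n}$. By Sumner's theorem (quoted from \cite{sumner}), it suffices to prove that every such $C$ is connected (which it is, being a component) and \emph{randomly matchable}, i.e. equimatchable with a perfect matching. So the whole proof reduces to two things: (i) $C$ has a perfect matching, and (ii) $C$ is equimatchable.

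\medskip

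\textbf{Step 1: $C$ has a perfect matching.} Since $G$ is factor-critical, $G-v$ has a perfect matching $N$. The minimal isolating matching $M$ consists of edges covering all neighbours of $v$; I would first use minimality to understand the structure of the edges of $M$ relative to the components. The cleanest route is to observe that $G\setminus V(M)$ has $\{v\}$ as one component together with the other components $C_1,\dots,C_t$, and that $G-v$ (which has a perfect matching) can be analyzed by restricting a perfect matching of $G-v$ to each $C_i$. The key point is that by minimality of $M$, removing any edge of $M$ would re-attach $v$ to some $C_i$; this should force each component $C_i$ to have even order and admit a perfect matching. I expect to build a perfect matching of $C$ by taking a perfect matching of $G-v$ and arguing, via minimality of $M$ and the fact that the $C_i$ are separated from $v$, that no matching edge crosses the boundary of $C$ in a way that obstructs a perfect matching inside $C$.

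\medskip

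\textbf{Step 2: $C$ is equimatchable.} This is the heart of the argument and the step I expect to be the main obstacle. The idea is to lift a maximal matching of $C$ to a maximal matching of $G$ and use equimatchability of $G$. Concretely, suppose for contradiction that $C$ has a maximal matching $F$ that is \emph{not} maximum, so $F$ leaves at least two vertices of $C$ unsaturated. I would then extend $F$ to a matching of $G$ by first adding an edge of $M$ incident with $v$ (to cover $v$) together with suitable matching edges on the remaining components and on $V(M)$, producing a maximal matching $F'$ of the whole graph $G$. Since $G$ is equimatchable, $F'$ must be maximum in $G$; comparing $|F'|$ with the matching obtained by instead using a \emph{maximum} matching of $C$ in the same construction would yield that $F$ must already have been maximum in $C$ — a contradiction. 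The delicate part is ensuring the extension is genuinely \emph{maximal} in $G$: I must check that the unsaturated vertices of $C$ cannot be matched to anything outside $C$ (which holds since $C$ is a component of $G\setminus V(M)$, so its only outside neighbours lie in $V(M)$, and these will all be saturated by the chosen edges of $M$), and that $v$ together with $V(M)$ admits a maximal matching compatible with the chosen $M$-edge. Handling these boundary bookkeeping details cleanly — in particular arranging the matching on $V(M)\cup\{v\}$ so that maximality transfers correctly and the count of saturated vertices is unaffected by which matching of $C$ is used — is where the real care is needed.

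\medskip

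Once both steps are in place, $C$ is connected, has a perfect matching, and is equimatchable, hence randomly matchable, and Sumner's characterization gives $C\cong K_{2n}$ or $C\cong K_{n,n}$, completing the proof. I would rely on the remark in the excerpt that this technique ``is well known'' and was used in \cite{KPS} and \cite{KP}; the novelty here is only that it is stated for arbitrary connectivity, so I would keep the exposition of the extension argument short and focus the writing on making the maximality-transfer in Step 2 airtight.
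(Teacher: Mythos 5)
Your overall strategy (reduce to Sumner's theorem by showing each component is randomly matchable, using equimatchability of $G$ to control matchings of the components) is the paper's strategy, but your execution contains a genuine error and misses the one observation that makes the argument trivial. The error: you propose to cover $v$ by ``adding an edge of $M$ incident with $v$,'' but no such edge exists — $M$ isolates $v$ precisely means that $\{v\}$ is a component of $G\setminus V(M)$, so $v\notin V(M)$ and every neighbour of $v$ lies in $V(M)$. Covering $v$ would force you to discard an edge $xy$ of $M$ and match $v$ to, say, $x$, leaving $y$ to be dealt with; this is exactly the ``boundary bookkeeping'' you flag as delicate, and it is self-inflicted. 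The point of an isolating matching is the opposite move: leave $v$ uncovered on purpose. Take \emph{any} maximal matching $M'$ of $G'=G\setminus(V(M)\cup\{v\})$; then $M\cup M'$ is automatically a maximal matching of $G$, because every uncovered vertex of $G'$ has all its outside neighbours in $V(M)$ (all covered), and every neighbour of $v$ is in $V(M)$ as well. Since $G$ is equimatchable, $M\cup M'$ is maximum, and since $G$ is factor-critical, a maximum matching misses exactly one vertex; as $v$ is missed, $M'$ must cover all of $G'$, i.e.\ $M'$ is perfect. Because $M'$ was an arbitrary maximal matching, $G'$ is randomly matchable and Sumner's theorem applies to each component. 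This single paragraph subsumes both of your steps; the size-comparison argument you sketch in Step 2 is not needed, since factor-criticality already pins the number of missed vertices at one.

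Your Step 1 is also flawed as stated, independently of being redundant: a perfect matching of $G-v$ can certainly contain edges joining a component $C$ to $V(M)$, and minimality of $M$ does nothing to forbid this, so restricting such a matching to $C$ need not be a perfect matching of $C$. (In fact minimality of $M$ is never used in the correct proof; the lemma holds for any isolating matching.) The lesson is that one should not try to import a matching from $G-v$ into $C$, but rather export an arbitrary maximal matching of $G'$ into $G$, where equimatchability and factor-criticality do all the work.
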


\begin{proof}
Let $G' = G\setminus(V(M)\cup \{v\})$ and denote by
$M'$ any maximal matching of $G'$. Clearly, $M=M' \cup M_v$ is a maximal matching of $G$. The graph $G$ is factor-critical and equimatchable, hence $M$ leaves only the vertex $v$ uncovered and $M'$ must be a perfect matching of $G'$. Since arbitrary maximal matching $M'$ of $G'$ is a perfect matching of $G'$, $G'$ is randomly matchable and by \cite{sumner} all of its components are either complete with even number of vertices or complete regular bipartite.
\end{proof}

\noindent Note that since $G$ is factor-critical, there always exists a matching isolating any fixed vertex $v$ of $G$.

We say that a subgraph $H_1$ (such as a vertex, edge, or component) of a graph $G$ is \emph{linked} with other subgraph $H_2$ of same graph $G$ if there are vertices $k_1$ of $H_1$ and $k_2$ of  $H_2$ such that $k_1k_2\in E(G)$. 
We are now ready to prove our main result, which sharpens Lemma \ref{lemma:Components} by showing that $G'$ has only one component and
generalizes \cite[Lemma 1.6]{KPS}, which proves that $G'$ has only one component if $G$ is $3$-connected and planar.

\begin{theorem}\label{thm:main}
Let $G$ be a 2-connected, factor-critical equimatchable graph. Let $v$ be a vertex of $G$ and $M_v$ a minimal matching  isolating $v$. 
Then $G\setminus(V(M_v)\cup \{v\})$ is isomorphic with $K_{2n}$ or $K_{n,n}$ for some nonnegative integer $n$.
\end{theorem}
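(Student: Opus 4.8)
The plan is to reduce the statement to a connectivity assertion and then rule out disconnectedness by contradicting equimatchability. By Lemma~\ref{lemma:Components} applied to the connected graph $G$ and the minimal isolating matching $M_v$, every component of $G\setminus V(M_v)$ other than $\{v\}$ is isomorphic with $K_{2n}$ or $K_{n,n}$; since $M_v$ isolates $v$, these are exactly the components of $G':=G\setminus(V(M_v)\cup\{v\})$. Hence it suffices to prove that $G'$ has at most one nonempty component, as a single such component is precisely a graph of the claimed form (with the empty graph corresponding to $n=0$). Write $A:=V(M_v)$ and $M_v=\{a_1b_1,\dots,a_kb_k\}$. Two facts are recorded for later use. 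First, by minimality of $M_v$, deleting any edge $a_ib_i$ from $M_v$ destroys the isolation of $v$, so $v$ is adjacent to $a_i$ or to $b_i$ for every $i$. Second, since $M_v$ isolates $v$, the vertex $v$ has no neighbour in $G'$; in particular every edge of $G$ leaving a component $C$ of $G'$ has its other endvertex in $A$.

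Suppose, for contradiction, that $G'$ has components $C_1,\dots,C_m$ with $m\ge 2$. I first extract the consequences of $2$-connectivity. Because all edges leaving $C_j$ end in $A$ and $G$ is $2$-connected, $C_j$ cannot be joined to the rest of $G$ through a single vertex; thus each $C_j$ is linked to at least two distinct vertices of $A$. Consequently, choosing a vertex of $A$ adjacent to $C_2$ first and then a vertex of $A$ adjacent to $C_1$ different from it, we may fix two \emph{distinct} vertices $x_1,x_2\in A$ and vertices $y_1\in C_1$, $y_2\in C_2$ with $x_1y_1,x_2y_2\in E(G)$.

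The heart of the argument is to turn this configuration into a maximal matching that misses three vertices, which is impossible: as $G$ is factor-critical, $|V(G)|$ is odd and a maximum matching misses exactly one vertex, so by equimatchability every maximal matching misses exactly one vertex. The construction starts from the maximum matching $M_v\cup\bigcup_j M_{C_j}$ (where $M_{C_j}$ is a perfect matching of $C_j$), which misses only $v$, and modifies it around $x_1,x_2$. We place the edges $x_1y_1$ and $x_2y_2$ into the matching and replace $M_{C_j}$ for $j\in\{1,2\}$ by a maximal matching of $C_j-y_j$; since $C_j$ is $K_{2n}$ or $K_{n,n}$, every maximal matching of $C_j-y_j$ misses exactly one vertex $u_j\in C_j$, and $u_j$ may be chosen among several candidates. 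It remains to cover the partners of $x_1,x_2$ in $M_v$. If $\{x_1,x_2\}$ is itself an edge of $M_v$, no partner is freed and the resulting matching misses exactly $v,u_1,u_2$. Otherwise the freed partners are covered, where possible, by rerouting one of them to $v$ along the edge guaranteed by minimality of $M_v$; in every case one arranges that, besides the two leftovers $u_1,u_2$, exactly one further vertex (either $v$ or a freed partner in $A$) remains uncovered, for a total of three missing vertices.

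The main obstacle is to guarantee that the matching produced above is actually \emph{maximal}, for otherwise it merely extends to a maximum matching and yields no contradiction. The only possible augmenting edges join a leftover vertex $u_j\in C_j$ to an uncovered vertex, and the uncovered vertices are $u_1,u_2$ together with either $v$ or one freed vertex of $A$. No edge joins $u_1$ to $u_2$ (they lie in different components of $G'$) and $v$ has no neighbour in $G'$, so the danger is solely an edge from some $u_j$ to an uncovered vertex of $A$. This is precisely where the completeness of $C_j$ is used: since $u_j$ may be chosen to be an arbitrary vertex of the appropriate part of $C_j-y_j$, and since each uncovered vertex of $A$ is adjacent to only part of $C_j$, the leftover $u_j$ can typically be selected to avoid all uncovered vertices of $A$; when no such choice exists, the offending vertex of $A$ is instead absorbed by matching it into $C_j$, and a bookkeeping argument—using again that each $C_j$ is linked to at least two vertices of $A$ and that every matching edge meets a neighbour of $v$—shows that three uncovered vertices can still be realized. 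Verifying that these choices can always be made simultaneously, across the possible adjacency patterns between $A$ and the components, is the technical core of the proof.
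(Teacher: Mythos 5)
Your proposal correctly reduces the theorem to showing that $G'=G\setminus(V(M_v)\cup\{v\})$ is connected (via Lemma~\ref{lemma:Components}), and the case where the two edges of $M_v$ meeting $C_1$ and $C_2$ coincide (i.e.\ $x_1x_2\in M_v$) is handled soundly: there the two odd components $C_1-y_1$ and $C_2-y_2$ of $G\setminus V(M)$ force \emph{any} maximal extension to miss at least three vertices, with no maximality check needed --- this is exactly Claim~\ref{claim:connectionBetweenEdgeAndComponents} of the paper. But in the main case, where $x_1$ and $x_2$ lie on different edges of $M_v$, your argument has a genuine gap, and you say so yourself: ``verifying that these choices can always be made simultaneously \dots is the technical core of the proof.'' That unverified core is the entire content of the theorem. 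The difficulty is that once the partners $b_1,b_2$ of $x_1,x_2$ are freed, the set of uncovered vertices is no longer confined to odd components of $G\setminus V(M)$: the vertices $b_1,b_2$ may be adjacent to $v$, to each other, to any component $C_3,\dots,C_m$, and to arbitrary vertices of $C_1-y_1$ and $C_2-y_2$, so neither parity nor a free choice of the leftovers $u_1,u_2$ guarantees a maximal matching missing two or more vertices.

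Worse, your proposed patches can actively destroy the contradiction. Suppose $v$ is adjacent to $b_1$, and $b_2$ is adjacent to \emph{every} vertex of the part of $C_1-y_1$ in which the leftover $u_1$ must lie (say $C_1=K_{n,n}$ with $y_1\in X$, so $u_1$ must lie in $Y$, and $b_2$ dominates $Y$). Then you cannot choose $u_1$ to avoid $b_2$, and your fallback --- ``absorbing'' $b_2$ by matching it to some $z\in Y$ --- leaves $C_1-y_1-z \cong K_{n-1,n-1}$, which has a perfect matching; after also matching $vb_1$, the resulting maximal matching misses only $u_2$, i.e.\ exactly one vertex, and no contradiction arises. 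Escaping such configurations requires structural information your framework never develops: that both endpoints of an $M_v$-edge linked with a component must be linked with that same component (Claim~\ref{claim:connectionBetweenEdgeAndComponents3}), that $M_v$-edges linked with different components cannot be linked to each other (Claim~\ref{claim:connectionBetweenEdges}) or chained through further $M_v$-edges (Claim~\ref{claim:connectionBetweenEdges4}), and finally the path-minimization argument of Claim~\ref{claim:cutvertex} showing that \emph{every} path between two components of $G'$ passes through $v$, so that $v$ is a cutvertex --- contradicting $2$-connectivity. In other words, the paper's contradiction ultimately comes from the cutvertex structure, not from a single cleverly built matching around two components; your local construction around $x_1,x_2$ cannot see connections between components that travel through chains of matching edges, and that is precisely where the proof's real work lies.
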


\begin{proof}
We prove the theorem by a series of claims. Let $G'=G\setminus(V(M_v)\cup \{v\})$.

\begin{claim}\label{claim:connectionBetweenEdgeAndComponents}
 If $xy$ is an arbitrary edge of matching $M_v$, then $x$ and $y$ cannot be linked to different components of $G'$.
\end{claim}
\begin{cProof}
We prove the claim by contradiction. Let $C$ and $D$ be different components of $G'$ and suppose that $x$ is adjacent to a vertex $x'$ of $C$ and $y$ is adjacent to a vertex  $y'$ of $D$. Let $M$ be defined by $M = \left(M_v \setminus \{xy\}\right)\cup \left\{ xx', yy'\right\}$. It is easy to see that $M$ is a matching of $G$.
Furthermore, $C-x'$ and $D-y'$ are components of $G\setminus M$. 
From Lemma~\ref{lemma:Components} follows that $C$ and $D$ have even number of vertices and hence both 
$C-x'$ and $D-y'$ have odd number of vertices. It follows that any maximal matching $M'$ such that $M\subseteq M'$ leaves uncovered at least one vertex of both $C-x'$ and $D-y'$.
This is a contradiction with the fact that $G$ is equimatchable and factor-critical. 
\end{cProof}

\begin{claim}\label{claim:connectionBetweenEdgeAndComponents2}
Let $C$ be a component of $G'$ and $xy$ an edge of matching $M_v$ such that $x$ is linked to some vertex $x'$ from $C$. Then $y$ is linked either to $v$ or to some vertex $y'$ of $C$ such that $y' \neq x'$.
\end{claim}
\begin{cProof}
Suppose that $y$ is linked  neither with $C$ nor with $v$. Let $M$ be  defined by $M = \left(M_v \setminus \{xy\}\right)\cup \{xx'\}$. It is easy to see that $M$ is a matching of $G$.
As all neighbours of $v$ are covered by $M$, any maximal matching $M'$ of $G$ such that $M\subseteq M'$ leaves $v$ uncovered. Since $x$ is linked with $C$, by Claim~\ref{claim:connectionBetweenEdgeAndComponents} $y$ cannot be linked to any other component of $G'$. According to our assumption, $y$ is not linked with $v$ or $C$. Therefore, $M'$ leaves uncovered both $v$ and $y$. This is a contradiction with the fact that $G$ is equimatchable and factor-critical, which completes the proof of the claim.
\end{cProof}

\begin{claim}\label{claim:two-independent-edges}
For any edge $e$ of $M_v$ linked with a component $C$ of $G'$, there are two independent edges joining the endvertices of $e$ with $v$ and $C$, respectively.
\end{claim}
\begin{cProof}
Let $e=xy$ and suppose that $x$ is linked with a vertex $x'$ of $C$. By Claim \ref{claim:connectionBetweenEdgeAndComponents2}, $y$ is linked either with $v$ or with some vertex $y'$ of $C$. If $y$ is linked with $v$, then $xx'$ and $yv$ are the two desired edges and we are done. If $y$ is not linked with $v$, then by the minimality of $M_v$ $v$ is linked with $x$. In this case $xv$ and $yy'$ are the desired edges, which completes the proof. 
\end{cProof}

\begin{claim}\label{claim:connectionBetweenEdgeAndComponents3}
Let $C$ be an arbitrary component of $G'$ and $xy$  an edge of $M_v$ linked with $C$. 
If $G'$ has at least two components, then there are two independent edges joining $x$ and $y$ with $C$.
\end{claim}
\begin{cProof}
Without loss of generality assume that $x$ is adjacent to a vertex $x'$ of $C$ and suppose to the contrary that $y$ is not adjacent to a vertex of $C$ different from $x'$. Let $D$ be a component of $G'$ different from $C$.
Since $G$ is $2$-connected, $D$ is linked with at least two vertices of $G\setminus V(D)$. Furthermore, the fact that $v$ is not linked with $D$ implies that these two vertices must be vertices of $M_v$. Because $x$ is linked with $C$, from Claim \ref{claim:connectionBetweenEdgeAndComponents} we get that $y$ cannot be linked with $D$ and thus at least one of the vertices of $M_v$ linked with $D$ is different from both $x$ and $y$. Let $x_1y_1$ be an edge of $M_v$ linked with $D$ such that $x_1y_1\neq xy$.
According to Claim \ref{claim:two-independent-edges} we can assume that $x_1$ is adjacent to a vertex $x_1'$ 
from $D$ and $y_1$ is adjacent to $v$. It is clear that the set $M$ defined by $M= (M_v\setminus\{xy,x_1y_1 \})\cup \{xx',x_1x_1', y_1v\}$ is a matching of $G$. 
Claim \ref{claim:connectionBetweenEdgeAndComponents} implies that $y$ is not linked with any component of $G'$ different from $C$ and in particular, it is not linked with $D$.
According to our assumption, $y$ is not adjacent to any vertex of $C-x'$. It follows that any maximal matching $M'$ such that $M\subseteq M'$ leaves uncovered $y$ and one vertex of both $C$ and $D$. This contradicts equimatchability and factor-criticality of $G$ and completes the proof of the claim.
\end{cProof}

\begin{claim}\label{claim:connectionBetweenEdges}
Let $e$ and $f$ 
be two edges of $M_v$ 
linked with two different components of $G'$.
Then $e$ and $f$ are not linked.
\end{claim}
\begin{cProof}
Let $e=x_1y_1$ and $f=x_2y_2$. Assume that $e$ is linked with a component $C$ of $G'$ and $f$ is linked with a component $D$ of $G'$. Claim \ref{claim:connectionBetweenEdgeAndComponents3} implies that both $x_1$ and $y_1$ are linked with $C$ and both $x_2$ and $y_2$ are linked with $D$. Suppose to the contrary that that $e$ and $f$ are linked; we can assume that they are linked by edge $x_1x_2$. Let $y_1'$ be a vertex of $C$ adjacent to $y_1$ and $y_2'$ a vertex of $D$ adjacent to $y_2$. Clearly, the set $M$ defined by 
$M = \left(M_v \setminus \left\{x_1y_1,x_2y_2\right\}\right) \cup \left\{x_1x_2,y_1y_1',y_2y_2'\right\}$ is a matching of $G$ and any maximal matching $M'$ such that $M\subseteq M'$ leaves unmatched $v$ and at least one vertex of both $C$ and $D$, again contradicting the equimatchability and factor-criticality of $G$.
\end{cProof}

\begin{claim}\label{claim:connectionBetweenEdges4}
Let $e$, $f_1$, and $f_2$ be edges of $M_v$ and $C$ and $D$ two different components of $G'$ such that $C$ is linked with $f_1$ and $D$ is linked with $f_2$.
If $e$ is not linked with $C$, then it is not linked with $f_1$.
\end{claim}
\begin{cProof}
Let $e=uw$, $f_1=x_1y_1$, and $f_2=x_2y_2$, and for the contrary suppose that $e$ is linked with $f_1$.
Since $e$ and $f_1$ are linked, by Claim~\ref{claim:connectionBetweenEdges}  $e$ is not linked to any component of $G'$ different from $C$. Moreover, by our assumption $e$ is not linked with $C$.
By  Claim \ref{claim:connectionBetweenEdgeAndComponents3} there are two independent edges joining $f_1$ and $C$ and two independent edges joining $f_2$ and $D$. Therefore, we can assume that $u$ is linked with $x_1$.
As $M_v$ is minimal, $f_2$ is linked with $v$; let $x_2$ be adjacent to $v$. 
Let $y_1'$ be a vertex of $C$ adjacent to $y_1$ and $y_2'$ a vertex of $D$ adjacent to $y_2$.
It is clear that the set $M$ defined by $M=\left(M_v \setminus \left\{ 
e,f_1,f_2\right\}\right)\cup \left\{ux_1,y_1y_1',vx_2,y_2y_2'\right\}$ is a matching of $G$. 
Since $e$ is not linked with any component of $G'$,
 any maximal matching $M'$ of $G$ such that $M\subseteq M'$ leaves unmatched the vertex $w$ and one vertex of both $C$ and $D$, which contradicts the fact that $G$ is equimatchable and factor-critical.
\end{cProof}

\begin{claim}\label{claim:cutvertex}
If $G'$ has at least two components, then $v$ is a cutvertex.
\end{claim}
\begin{cProof}
Our aim is to show that in $G-v$ there is no path between arbitrary two components of $G'$. 
We proceed by contradiction: suppose there is such a path and among all such paths, choose a path that minimizes the number $k$ of edges of $M_v$ incident with it. Denote one of the paths with $k$ minimal by  $P$ and by $C$ and $D$ the components of $G'$ joined by $P$.
From the fact that $C$ and $D$ are components of $G'$ follows that they cannot be linked directly, and consequently $k>0$. 
Let $e$ and $f$ be the first, respectively the last, edge of $M_v$ incident with $P$.
As no other component of $G'$ is linked with either $C$ or $D$, we get that $e$ is linked with $C$ and $f$ is linked with $D$. 
From Claim \ref{claim:connectionBetweenEdgeAndComponents3} follows that both endvertices of $e$ are linked with $C$ and then Claim \ref{claim:connectionBetweenEdgeAndComponents} implies that $e$ is not linked with $D$. Therefore, $e$ and $f$ are distinct and  $k>1$. Notice that $k=2$ is equivalent with $e$ and $f$ being linked, which is not possible due to Claim \ref{claim:connectionBetweenEdges}. 
Suppose that $k\ge 3$. By the minimality of $k$, there is an edge $a$ of $M_v$ such that $a$ is linked with $e$, but not with $C$. However, this contradicts Claim \ref{claim:connectionBetweenEdges4} and hence $k\ge 3$ is not possible.
We conclude that any  path between $C$ and $D$ contains $v$. Since $G$ is connected, there is at least one such path. Consequently, $v$ is a cutvertex of $G$, which completes the proof of the claim.
\end{cProof}

From the fact that $G$ is $2$-connected and from Claim~\ref{claim:cutvertex} it follows that $G'$ has only one component. Lemma~\ref{lemma:Components} implies that this component is either $K_{2n}$ or $K_{n,n}$, which completes the proof.
\end{proof}

The characterisation of equimatchable factor-critical graphs with a cut-vertex in \cite{favaron:1986} implies that in such graphs $G\setminus V(M_v)$ can have arbitrarily-many components and therefore, Theorem \ref{thm:main} cannot be extended to graph that are not $2$-connected.

\section{Size of $2$-connected equimatchable graphs on surfaces}
\label{section:size}
The aim of this section is to obtain good lower and upper bounds on the maximum size of equimatchable factor-critical graphs  embeddable in the surface of arbitrary fixed genus using Theorem \ref{thm:main}.
We start by showing that there are arbitrarily large equimatchable factor-critical graphs with a cutvertex and any given genus.

\begin{prop}
For any nonnegative integers $g$, $h$, and $k$ there exist connected factor-critical equimatchable graphs $G$ and $\tilde G$ with at least $k$ vertices such that $G$ has  orientable genus $g$ and $\tilde G$ has nonorientable genus $h$.
\end{prop}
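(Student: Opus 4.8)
The plan is to construct, for any target genus, arbitrarily large factor-critical equimatchable graphs with a cut-vertex by exploiting the structural description from Favaron \cite[Theorem 1.1]{favaron:1986}: such a graph is built by attaching randomly matchable ``blocks'' ($K_{2n}$ or $K_{n,n}$) to a single cut-vertex $v$, where $v$ must be adjacent to at least two adjacent vertices of each block. The idea is to realise the genus $g$ (respectively $h$) entirely within one large complete block, while adding many small ``triangle'' blocks $K_2$ (pairs of adjacent vertices, each joined to $v$) to inflate the vertex count arbitrarily without increasing the genus. Since the small blocks together with $v$ form planar gadgets, they can be embedded in a disc cut out of the surface used for the large block.

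First I would fix a vertex $v$ and take one block $B$ to be a complete graph $K_{2m}$ with $m$ chosen so that $\gamma(K_{2m})=g$; by Theorem \ref{thm:genus-complete} the orientable genus of $K_n$ grows quadratically, so a suitable $m$ (or a block differing by $O(1)$ vertices, if exact equality is not attainable, handled by a small adjustment argument) exists. I would attach $v$ to two adjacent vertices of $B$, guaranteeing the Favaron adjacency condition. Then, to reach at least $k$ vertices, I would attach $k$ additional blocks each isomorphic to $K_2$, joining $v$ to both endpoints of each such edge. The resulting graph $G$ is connected, has $v$ as its unique cut-vertex, and every component of $G-v$ is complete or complete bipartite, so by the converse direction of Favaron's characterisation it is factor-critical and equimatchable. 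A direct check that every maximal matching is maximum can be given as a sanity argument: any maximal matching saturates each block by a near-perfect matching and leaves exactly $v$ exposed, matching the factor-critical structure.

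Next I would verify the genus is exactly $g$. The lower bound $\gamma(G)\ge g$ is immediate since $G$ contains $B\cong K_{2m}$ as a subgraph and genus is monotone under taking subgraphs. For the upper bound I would embed $B$ in $S_g$, choose a face $F$ of this embedding incident with $v$, and route the pendant $K_2$-blocks together with their edges to $v$ inside $F$: each gadget (the edge plus the two spokes to $v$) is a triangle, hence planar, and finitely many disjoint triangles sharing only the vertex $v$ fit in a single disc. This shows $\gamma(G)\le g$, giving equality. The nonorientable case is identical after replacing $\gamma$ by $\tilde\gamma$, $K_{2m}$ by a complete graph realising $\tilde\gamma=h$, and $S_g$ by $N_h$; I would invoke the standard convention that the proofs coincide and omit the repetition.

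The main obstacle I anticipate is the genus-realisation step: Theorem \ref{thm:genus-complete} gives the genus of $K_n$ only as a ceiling of a quadratic, so not every integer $g$ is hit exactly by some $\gamma(K_{2m})$. The clean fix is to enlarge the toolbox of admissible large blocks — for instance, using $K_{n,n}$, whose genus $\lceil (n-2)^2/4\rceil$ realises a denser set of values, or combining a base block with a bounded number of extra handles absorbed by a slightly larger block — and then argue that for every $g$ there is \emph{some} randomly matchable block of genus exactly $g$. Alternatively, since the proposition only demands that such graphs exist for every $g$, I would select the smallest randomly matchable block $B$ with $\gamma(B)\ge g$ and, if $\gamma(B)>g$, fall back to an explicit minimum-genus embedding argument or to a direct handle-counting construction guaranteeing a block of genus precisely $g$; this bookkeeping, rather than any conceptual difficulty, is where the real care is needed.
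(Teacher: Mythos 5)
Your construction is essentially the one in the paper: the paper forms the vertex amalgamation of $K_{2n+1}$ with $k$ triangles at a vertex $v$, which is precisely your ``one large complete block plus $k$ pendant $K_2$-blocks'' graph (the paper joins $v$ to all vertices of the large block, you join it to two), and both arguments rest on Favaron's characterisation \cite{favaron:1986} together with the observation that the triangle gadgets can be drawn inside a face incident with $v$. Your verification of factor-criticality, equimatchability, and of $\gamma(G)=\gamma(B)$ is sound.

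The genuine gap is exactly the one you flagged yourself, and none of your proposed fallbacks closes it. In your construction the genus of $G$ equals the genus of the block $B$, and Favaron's theorem forces $B$ (which, up to the two spokes, is a component of $G-v$) to be $K_{2n}$ or $K_{n,n}$; but the genera of these graphs miss infinitely many integers. Concretely, by Theorem \ref{thm:genus-complete}, $\gamma(K_{10})=4$, $\gamma(K_{12})=6$, $\gamma(K_{6,6})=4$, and $\gamma(K_{7,7})=7$, so no $K_{2n}$ and no $K_{n,n}$ has orientable genus $5$: your hoped-for claim that ``for every $g$ there is \emph{some} randomly matchable block of genus exactly $g$'' is false, and ``handle-counting'' or ``an explicit minimum-genus embedding argument'' are placeholders rather than proofs. (In fairness, the paper's own proof has the same defect: it posits $K_{2n+1}$ with $\gamma(K_{2n+1})=g$, which fails already for $g=2$, since $\gamma(K_7)=1$ and $\gamma(K_9)=3$.) The clean repair is to use many small blocks instead of one large one: amalgamate at $v$ exactly $g$ copies of $K_7$ together with $k$ triangles. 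Every component of $G-v$ is then $K_6$ or $K_2$ and $v$ is adjacent to two adjacent vertices of each, so your Favaron argument applies verbatim, while the additivity of orientable genus over one-point amalgamations (Battle--Harary--Kodama--Youngs) gives $\gamma(G)=g\cdot\gamma(K_7)+k\cdot 0=g$ exactly. In the nonorientable case one can take $h$ copies of $K_5$ and use additivity of Euler genus, checking that the resulting amalgamation does embed in $N_h$.
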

\begin{proof}
Let $n$ be an integer such that $K_{2n+1}$ has orientable genus $g$ and  $v$ an arbitrary vertex of $K_{2n+1}$. 
Take $k$ copies of the triangle $K_3$ and designate one vertex in each copy.
It is easy to verify that the graph obtained by vertex amalgamation of $K_{2n+1}$ at $v$ and $k$ triangles at the designated vertices is a connected factor-critical equimatchable graph with genus $g$ and at least $k$ vertices. The proof of the nonorientable case is analogous.
\end{proof}

It is easy to see and well-known that any complete bipartite graph $K_{m,n}$  is equimatchable.

\begin{prop}
\label{prop:bip-equi}
For any integers $m$ and $n$ such that $m\ge n$ the complete bipartite graph $K_{m,n}$ is equimachable and its maximum matching has size $n$.
\hfill $\square$
\end{prop}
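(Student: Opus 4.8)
The plan is to verify both assertions directly from the definitions, exploiting the fact that in a complete bipartite graph every vertex on one side is adjacent to every vertex on the other. Write the bipartition of $K_{m,n}$ as $(A,B)$ with $|A|=m\ge n=|B|$.

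First I would pin down the maximum matching size. Since every edge of $K_{m,n}$ has exactly one endvertex in $B$, any matching covers at most $|B|=n$ vertices of $B$ and hence consists of at most $n$ edges. Conversely, because $m\ge n$, one can assign to each of the $n$ vertices of $B$ a distinct vertex of $A$ and take the corresponding edges, producing a matching of size exactly $n$. Thus every matching has at most $n$ edges and this bound is attained, so the maximum matching size is $n$.

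For equimatchability it suffices to show that every maximal matching $M$ has size $n$. Set $|M|=k$; then $M$ covers exactly $k$ vertices of $A$ and $k$ vertices of $B$, leaving $m-k$ vertices of $A$ and $n-k$ vertices of $B$ uncovered. If both $m-k>0$ and $n-k>0$, then there is an uncovered vertex $a\in A$ and an uncovered vertex $b\in B$; since $K_{m,n}$ is complete bipartite, $ab$ is an edge, so $M\cup\{ab\}$ is a matching properly containing $M$, contradicting its maximality. Hence $m-k=0$ or $n-k=0$. Combining this with the bound $k\le n\le m$ from the previous paragraph, either alternative forces $k=n$. Therefore every maximal matching has size $n$, which equals the maximum matching size, so $K_{m,n}$ is equimatchable.

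There is no substantive obstacle here: the whole argument rests on the single observation that an uncovered pair of vertices on opposite sides of a complete bipartite graph is always joined by an edge, which rules out any maximal matching of size strictly less than $n$. The only point deserving a moment's care is the final step, where one combines $m-k=0$ or $n-k=0$ with $k\le n\le m$ to conclude $k=n$ in both cases.
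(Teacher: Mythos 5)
Your proof is correct. The paper itself gives no proof of this proposition at all --- it is stated as well-known and marked with $\square$ --- so there is nothing to diverge from; your argument (counting edges against the smaller partite set, exhibiting a matching of size $n$, and using completeness to show a maximal matching cannot leave uncovered vertices on both sides) is exactly the standard verification the authors had in mind, including the careful handling of the case $k=m$ via $k\le n\le m$.
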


The next three results yield a construction of large $2$-connected equimatchable factor-critical graphs embeddable in any fixed surface.

\begin{lem}\label{lem:union}
Let $u$ and $v$ be adjacent vertices of $K_{n,n}$ and  $x$ and $y$ different vertices from the larger partite set of $K_{m+1,m}$ for some $m$ and $n$. Then the graph $G$ defined by $G = K_{n,n} \cup K_{m+1,m} \cup \{ux, vy \}$ is factor-critical and equimatchable.
\end{lem}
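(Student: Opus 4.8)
The plan is to verify the two assertions separately. Write $A,B$ for the partite sets of the copy of $K_{n,n}$, labelled so that $u\in A$ and $v\in B$, and write $X,Y$ for the partite sets of $K_{m+1,m}$ with $|X|=m+1$, $|Y|=m$, and $x,y\in X$. The only edges joining the two complete bipartite pieces are the two independent cross edges $ux$ and $vy$, and no edges run inside $A$, $B$, $X$, or $Y$. The total number of vertices is $2n+(2m+1)$, which is odd, as it must be for a factor-critical graph. Since $G$ is factor-critical of odd order, its maximum matchings are exactly the near-perfect matchings, so \emph{equimatchability is equivalent to the statement that every maximal matching leaves exactly one vertex uncovered}; this reformulation is what I intend to prove in the second half.

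For factor-criticality I would remove an arbitrary vertex $w$ and exhibit a perfect matching of $G-w$ by a short case analysis, the guiding idea being that the cross edges $ux$ and $vy$ are precisely the tools that let us ``export'' the two distinguished vertices $x,y$ of the larger side $X$ so as to rebalance $K_{m+1,m}$. If $w\in X$, then $K_{m+1,m}-w$ is a balanced $K_{m,m}$ and $K_{n,n}$ is already balanced, so no cross edge is needed. If $w\in A$ (including $w=u$), use the cross edge $vy$ to cover $v$, which turns $K_{n,n}-\{w,v\}$ into $K_{n-1,n-1}$ and $K_{m+1,m}-y$ into $K_{m,m}$, both of which are perfectly matchable; the case $w\in B$ is symmetric via $ux$. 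Finally, if $w\in Y$, use \emph{both} cross edges to cover $x$ and $y$, leaving the balanced pieces $K_{m-1,m-1}$ and $K_{n-1,n-1}$. Each case reduces to perfect matchings of balanced complete bipartite graphs, so this part is routine.

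The main obstacle is equimatchability. Let $M$ be an arbitrary maximal matching and let $U$ be its uncovered set; maximality forces $U$ to be independent. I would record the counts $p=|\{M\text{-edges inside }K_{n,n}\}|$ and $q=|\{M\text{-edges inside }K_{m+1,m}\}|$, together with indicators $c_1,c_2\in\{0,1\}$ for whether $ux,vy\in M$, and then read off the number of uncovered vertices in each part directly from how many edges of each type touch it:
\[
u_A=n-p-c_1,\quad u_B=n-p-c_2,\quad u_X=m+1-q-c_1-c_2,\quad u_Y=m-q .
\]
Because $K_{n,n}$ is complete bipartite, independence of $U$ forces $u_Au_B=0$, and likewise $u_Xu_Y=0$; moreover $q\le m$. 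The proof then splits according to whether uncovered vertices sit only in $K_{n,n}$, only in $K_{m+1,m}$, or in both. In each single-piece case the equation for the fully covered piece pins down $q$ and the relation $c_1+c_2\in\{0,1,2\}$, and substituting back shows that the surviving nonnegative uncovered count equals exactly $1$.

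The crux is ruling out the ``mixed'' case in which both pieces contain uncovered vertices. Here I expect the decisive bookkeeping: if, say, $u_A>0$ and $u_Y>0$, then maximality makes $B$ and $X$ fully covered, so $u_B=0$ and $u_X=0$; the equation $u_X=0$ together with $u_Y=m-q>0$ forces $c_1+c_2\ge 2$, hence $c_1=c_2=1$, and then $u_B=0$ gives $p=n-1$, whence $u_A=n-p-c_1=0$, a contradiction. The remaining mixed subcases ($u_A>0,u_X>0$ and the two obtained by the symmetry exchanging $A\!\leftrightarrow\!B$, $u\!\leftrightarrow\!v$, $x\!\leftrightarrow\!y$, which swaps $c_1$ and $c_2$) fall the same way. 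Thus no maximal matching can leave three or more vertices uncovered, every maximal matching is near-perfect, and $G$ is equimatchable. I anticipate that the only delicate point is keeping the four covered-count equations and the independence constraints bookkept correctly; once they are in place, each case is a one-line linear computation.
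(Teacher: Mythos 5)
Your proof is correct, but its equimatchability half takes a genuinely different route from the paper. The factor-criticality part is essentially the paper's own argument: the same case split on the location of the removed vertex $w$, with the cross edges $ux$ and $vy$ used to rebalance the two bipartite pieces, each case reducing to a perfect matching of a balanced complete bipartite graph. For equimatchability, however, the paper does not count uncovered vertices: it takes an \emph{arbitrary} matching $M$, splits into three cases according to which of $ux$, $vy$ lie in $M$, and in each case the restrictions of $M$ to the (possibly vertex-deleted) pieces are extended to maximum matchings using the fact that every complete bipartite graph is equimatchable (Proposition \ref{prop:bip-equi}); since the deleted pieces are again complete bipartite ($K_{n,n-1}$, $K_{m,m}$, $K_{n-1,n-1}$, $K_{m-1,m}$), these extensions glue with the cross edges into a near-perfect matching of $G$ containing $M$. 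You instead fix a \emph{maximal} matching, record the four uncovered counts $u_A,u_B,u_X,u_Y$ as linear expressions in $p,q,c_1,c_2$, and use independence of the uncovered set ($u_Au_B=u_Xu_Y=0$) together with nonnegativity to force exactly one uncovered vertex; I verified the single-piece cases and all four mixed subcases, and they close exactly as you claim (one minor looseness: in the ``uncovered only in $K_{m+1,m}$'' case the covered-piece equations pin down $c_1=c_2$ rather than $q$, but the computation is equally immediate). The trade-off: the paper's route is modular and short, delegating all work to the known extension property of complete bipartite graphs, whereas yours is self-contained -- it never invokes Proposition \ref{prop:bip-equi} or matching extension -- at the price of more bookkeeping, and it directly establishes the equivalent reformulation (via factor-criticality and odd order) that every maximal matching of $G$ is near-perfect.
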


\begin{proof}
Denote by $H_1$ the copy of $K_{n,n}$ and by $H_2$ the copy of $K_{m+1,m}$ in $G$, thus $G = H_1 \cup H_2 \cup \{ux,vy\}$.
First, we show that $G$ is factor-critical, that is, the graph $G-w$ has a perfect matching for any vertex $w$ of $G$. Denote by $A$ and $B$ the larger, respectively the smaller partite set of $H_2$. We distinguish three cases.

\textit{Case 1: $w$ is a vertex of $H_1$.} We can assume that $w$ is in same partite set as $v$. Clearly, there is a perfect matching $M_1$ of $H_1 \setminus \{u, w\}$ and a perfect matching $M_2$ of $H_2 -x$. It follows that matching $M$ defined by $M = M_1 \cup M_2 \cup \{ux\}$ is a perfect matching of $G -w$.

\textit{Case 2: $w$ is a vertex of $A$.} Take any perfect matching $M_1$ of $H_1$ and any perfect matching $M_2$ of $H_2- w$. The matching $M$ defined by $M = M_1 \cup M_2$  is clearly a perfect matching of $G -w$.

\textit{Case 3: $w$ is a vertex of $B$.} Take any perfect matching  $M_1$ of $H_1 \setminus \{u, v\}$ and any perfect matching $M_2$ of $H_2 \setminus \{w,x, y\}$. It is easy to see that the matching $M$ defined by $M = M_1 \cup M_2 \cup \{ux, vy\}$  is a perfect matching of $G -w$.

Now we show that $G$ is equimatchable by proving that any matching $M$ of $G$ is a subset of a maximum matching.
As $G$ is factor-critical, any maximum matching of the graph $G$ leaves precisely one vertex uncovered. 
We distinguish three cases according to which of the edges $ux$ and $vy$ lie in $M$.

\emph{Case 1: neither $ux$ nor $vy$ is an edge of $M$.} Clearly, $M$ is a disjoint union of matchings $M_1$ of $H_1$ and $M_2$ of $H_2$. Since both $H_1$ and $H_2$ are equimatchable by Proposition \ref{prop:bip-equi}, the matchings $M_1$ and $M_2$ can be extended to maximum matchings $M_1'$ of $H_1$ and $M_2'$ of $H_2$, respectively. Clearly, the matching $M_1'$ covers all vertices of $H_1$ and $M_2'$ covers all but one vertices of $H_2$. Therefore, the matching $M'$ defined by $M'=M_1' \cup M_2'$ is a maximum matching of $G$.

\emph{Case 2: either $ux$ or $vy$ is an edge of $M$, but not both.} We can assume that $ux$ is an edge of $M$ and $vy$ is not an edge of $M$. Let $H_1' = H_1 - u$ and $H_2'=H_2-x$. Observe that $H_1'$ is isomorphic with $K_{n,n-1}$ and $H_2'$ is isomorphic with $K_{m,m}$. Consequently, by Proposition \ref{prop:bip-equi} $H_1'$ is equimatchable and any its maximum matching misses exactly one vertex and $H_2'$ is 
equimatchable and has a perfect matching. The matching $M$ is a disjoint union of matching $M_1$ of $H_1'$, matching $M_2$ of $H_2'$, and the edge $ux$.
By equimatchability of $H_1'$ and $H_2'$ the matching $M_1$ extends to a matching $M_1'$ of $H_1'$ missing exactly one vertex and $M_2$ extends to a perfect matching of $H_2'$. The matching $M'$ defined by $M' = M_1'\cup M_2' \cup \{ux\}$ is the desired matching of $G$ missing exactly one vertex.

\emph{Case 3: both $ux$ and $vy$ are edges of $M$.} Let $H_1' = H_1 \setminus \{u,v\}$ and $H_2' = H_2 \setminus \{x,y \}$. Observe that $H_1'$ is isomorphic with $K_{n-1,n-1}$ and $H_2'$ is isomorphic with $K_{m-1,m}$ and thus, by Proposition \ref{prop:bip-equi}, both are equimatchable, $H_1'$ admitting a perfect matching and $H_2'$ a matching missing exactly one vertex.
Clearly, $M$ is a disjoint union of matchings $M_1$ of $H_1'$, $M_2$ of $H_2'$, and edges $ux$ and $vy$. Again, $M_1$ extends to a perfect matching $M_1'$ of $H_1'$ and $M_2$ extends to a matching $M_2'$ of $H_2'$ missing exactly one vertex. Therefore, the matching $M'$ defined by $M' = M_1' \cup M_2' \cup \{ux,vy\}$ is a matching of $G$ missing exactly one vertex.
\end{proof}

Although we need the following lemma only for $K_{n,n}$ and $K_{m+1,m}$, we state it in a general form since the proof is identical.

\begin{lem}
\label{lemma:genus-bipartite-join}
Let $a,b,c,d$ be positive integers such that $c>d$.
Let $u$ and $v$ be two adjacent vertices of $K_{a,b}$. 
Then there are two distinct vertices  $x$ and $y$ from the larger partite set of $K_{c,d}$ such that the graph $G$ defined by $G = K_{a,b} \cup K_{c,d} \cup \{ux, vy \}$  has the genus equal to  $\gamma(K_{a,b}) + \gamma(K_{c,d})$. Similarly,
 there are two distinct vertices   $\tilde x$ and $\tilde y$ from the larger partite set of $K_{c,d}$ such that the graph $G$ defined by $\tilde G = K_{a,b} \cup K_{c,d} \cup \{u\tilde x, v\tilde y \}$  has the genus equal to  $\tilde\gamma(K_{a,b}) + \tilde\gamma(K_{c,d})$.

\end{lem}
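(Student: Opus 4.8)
The plan is to prove the two matching inequalities $\gamma(G)\le \gamma(K_{a,b})+\gamma(K_{c,d})$ and $\gamma(G)\ge \gamma(K_{a,b})+\gamma(K_{c,d})$. The lower bound will hold for \emph{every} choice of the attaching vertices $x,y$, whereas the upper bound is exactly where the freedom to choose $x,y$ is exploited, so that combining the two yields the asserted equality for a suitable pair. Since the orientable and nonorientable arguments are identical after replacing $\gamma$, $S_g$ and connected sums of handles by $\tilde\gamma$, $N_h$ and connected sums of crosscaps, I would carry out the orientable case in full and only remark that the nonorientable one is analogous.

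The lower bound is essentially immediate. Deleting the two edges $ux$ and $vy$ from $G$ cannot increase the genus, and $G\setminus\{ux,vy\}$ is precisely the disjoint union $K_{a,b}\cup K_{c,d}$. By the classical additivity of the genus over the connected components of a graph (see, e.g., \cite{GT}), one has $\gamma(K_{a,b}\cup K_{c,d})=\gamma(K_{a,b})+\gamma(K_{c,d})$, and hence $\gamma(G)\ge\gamma(K_{a,b})+\gamma(K_{c,d})$ no matter which vertices $x,y$ are chosen.

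For the upper bound I would build an explicit embedding on the connected sum of the two minimum-genus surfaces. Fix a minimum-genus embedding of $K_{a,b}$; since $u$ and $v$ are adjacent, the edge $uv$ is present and each of the faces incident with it carries both $u$ and $v$ on its boundary, so there is a face $F_1$ meeting $u$ and $v$. Fix a minimum-genus embedding of $K_{c,d}$ and choose a face $F_2$ together with two distinct vertices $x,y$ of the larger partite set on $\partial F_2$; such a pair exists because every face of $K_{c,d}$ is incident with at least two vertices of its larger part (when $d\ge 2$ the graph is $2$-connected and each face is bounded by an even cycle of length at least $4$, hence meets at least two vertices of each part, while for $d=1$ the graph is a star whose unique face meets all $c\ge 2$ leaves). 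These $x,y$ are the vertices claimed by the lemma. Now form the connected sum by excising an open disk from the interior of $F_1$ and one from the interior of $F_2$ and identifying the two resulting boundary circles; this yields a surface of genus $\gamma(K_{a,b})+\gamma(K_{c,d})$ on which $K_{a,b}\cup K_{c,d}$ is embedded, with $F_1$ and $F_2$ merged into a single annular region bounded by $\partial F_1$ and $\partial F_2$. Finally I would route $ux$ and $vy$ as two disjoint arcs across this annulus: drawing $ux$ first cuts the annulus into a disk on whose boundary both $v$ and $y$ still appear, so $vy$ can then be added without meeting $ux$. The resulting (cellular) embedding realizes $G$ on $S_{\gamma(K_{a,b})+\gamma(K_{c,d})}$, giving the upper bound.

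The main obstacle is the upper-bound construction rather than the lower bound. One must verify that $K_{c,d}$ always admits a face carrying two vertices of the larger part (treating the degenerate star case $d=1$ separately, as above) and must argue carefully that the two attaching edges can be drawn disjointly through the connected-sum region for an \emph{arbitrary} cyclic arrangement of $u,v,x,y$ on the two boundary circles; the cut-along-an-arc observation is what makes this routing always possible. Once both inequalities are in hand, equality holds for the constructed pair $x,y$, and repeating the construction with minimum nonorientable-genus embeddings and crosscap connected sums establishes the nonorientable statement, completing the proof.
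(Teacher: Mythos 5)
Your proof takes essentially the same route as the paper's: the lower bound from subgraph monotonicity plus additivity of genus over disjoint components, and the upper bound by forming the connected sum of two minimum-genus embeddings through a face $F_1$ containing $u,v$ and a face $F_2$ containing two distinct vertices of the larger part of $K_{c,d}$, then routing $ux$ and $vy$ through the merged region, exactly as in the paper. The only blemish is your justification for the existence of $F_2$ when $d\ge 2$: $2$-connectivity does not force face boundaries to be cycles once the genus is positive, but the needed conclusion still holds because in a simple graph with all degrees at least $2$ a face walk cannot backtrack along an edge, so consecutive larger-side vertices on a face walk are distinct and every face of length at least $4$ meets two distinct vertices of each part.
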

\begin{proof}
We start by constructing the desired graph $G$ and its embedding of genus $\gamma(K_{a,b}) + \gamma(K_{c,d})$. Denote by $H_1$ a copy of $K_{a,b}$ and by $H_2$ a copy of $K_{c,d}$.
Let $\Pi_i$ be a minimum-genus embedding of $H_i$ for $i\in\{1,2\}$. Since the vertices $u$ and $v$ are adjacent, there is a face $F_1$ of $\Pi_1$ such that both $u$ and $v$ lie on the boundary of $F_1$. Because $H_2$ is bipartite, any face of $\Pi_2$ has length at least four and thus contains at least two vertices from the larger partite set of $H_2$. Let $x$ and $y$ be arbitrary two vertices of the larger partite set of $H_2$ that lie together on the boundary of a face $F_2$ of $\Pi_2$ and let $G = H_1 \cup H_2 \cup \{ux, vy \}$. 
 Adding one end of the edge $ux$ into the interior of $F_1$ and the other end of $ux$ into the interior of $F_2$ 
merges these faces into one face $F$, producing 
an embedding $\Pi$ of connected graph $H_1\cup H_2 \cup\{ux\}$ in the surface of genus $\gamma(H_1) + \gamma(H_2)$.
Consequently, both $v$ and $y$ lie on the boundary of $F$ and the edge $vy$ can be added into $\Pi$ without raising the genus, yielding the desired embedding of $G$ in the surface of genus  $\gamma(H_1) + \gamma(H_2)$. Since $H_1$ and $H_2$ are disjoint subgraphs of $G$, we get that $\gamma(G)\ge \gamma(H_1) + \gamma(H_2)$, which completes the proof of the orientable case. The proof of the nonorientable case is completely analogous.
\end{proof}

\begin{theorem}\label{thm:lowerBound}
For any nonnegative integers $g$ and $h$ there exist  $2$-connected factor-critical equimatchable graphs $G$ and $\tilde G$ such that $G$ has  orientable genus $g$ 
and at least $4\lfloor \sqrt{2g}\rfloor + 5$ vertices and $\tilde G$ has nonorientable genus $h$ and at least $4\lfloor \sqrt h \rfloor + 5$ vertices.
\end{theorem}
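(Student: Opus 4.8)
The plan is to construct explicit graphs of the form given by Lemma~\ref{lem:union}, namely $G = K_{n,n} \cup K_{m+1,m} \cup \{ux, vy\}$, and to choose the parameters $n$ and $m$ so that the genus hits exactly $g$ while keeping the vertex count as large as possible. By Lemma~\ref{lem:union} any such graph is automatically factor-critical and equimatchable, and by Lemma~\ref{lemma:genus-bipartite-join} we may select the attaching vertices $x,y$ so that the orientable genus of $G$ equals $\gamma(K_{n,n}) + \gamma(K_{m+1,m})$. So the only remaining tasks are (i) to check $2$-connectivity, and (ii) to do the arithmetic optimization that realizes a target genus $g$ with many vertices.

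\medskip

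**First I would** verify $2$-connectivity. The graph is two complete bipartite blocks joined by the two independent edges $ux$ and $vy$. Since each $K_{a,b}$ with both sides $\ge 1$ is connected, and the two bridges $ux, vy$ attach at distinct vertices on each side ($u \ne v$ and $x \ne y$), removing any single vertex cannot disconnect $G$: the two edges provide two internally disjoint connections between the blocks, and each block is itself $2$-connected for the relevant sizes. I expect this to be a short paragraph.

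\medskip

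**The main step** is the genus calculation. By Theorem~\ref{thm:genus-complete}, $\gamma(K_{n,n}) = \lceil (n-2)^2/4 \rceil$. The idea is to take the $K_{m+1,m}$ part small and fixed (contributing a constant number of vertices and a small constant genus) and let the $K_{n,n}$ part absorb essentially all of $g$; then $n \approx 2\sqrt{g}$, giving roughly $2n \approx 4\sqrt{g}$ vertices, which matches the claimed $4\lfloor\sqrt{2g}\rfloor + 5$ up to the constants. Concretely, I would choose $m$ so that $K_{m+1,m}$ contributes a small genus $g_0$ and a fixed vertex count ($5$ vertices suggests the choice $K_{3,2}$, which has genus $\lceil 1\cdot 0/4\rceil=0$, i.e. planar, contributing exactly $5$ vertices). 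Then I need $\gamma(K_{n,n}) = g$, i.e. $\lceil (n-2)^2/4\rceil = g$. Solving for the largest even-or-odd $n$ with this genus and bounding below, $n \ge 2\sqrt{g}$ roughly; tracking the floor/ceiling carefully should yield at least $4\lfloor\sqrt{2g}\rfloor + 5$ vertices. The nonorientable case is identical with $\tilde\gamma(K_{n,n}) = \lceil (n-2)^2/2\rceil$, producing the bound $4\lfloor\sqrt{h}\rfloor + 5$.

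\medskip

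**The hard part** will be the number-theoretic matching: given an arbitrary target genus $g$, the genus formula for $K_{n,n}$ only takes values of the form $\lceil (n-2)^2/4\rceil$, which do not cover every integer. So for a general $g$ I cannot expect $\gamma(K_{n,n})$ alone to equal $g$ exactly; I will need to let the second block $K_{m+1,m}$ (and possibly the value of $m$) take up the slack, using its genus $\gamma(K_{m+1,m}) = \lceil (m-1)(m-2)/4\rceil$ to fill the residual. The delicate point is to simultaneously hit the exact genus $g$ and keep the total vertex count $\ge 4\lfloor\sqrt{2g}\rfloor + 5$; this requires choosing which block carries the bulk of the genus and bounding the loss incurred by the ceiling functions. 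I would handle this by fixing $n$ as the largest integer with $\gamma(K_{n,n}) \le g$, setting the residual $g - \gamma(K_{n,n})$ to be realized by the smaller block, and then verifying the vertex bound via the inequality $n \ge 2\lfloor\sqrt{2g}\rfloor$ or similar, which is where the constant $4$ and the $\sqrt{2g}$ (rather than $\sqrt{g}$) precisely enter.
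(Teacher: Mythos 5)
Your proposal uses the same skeleton as the paper---Lemma~\ref{lem:union} for factor-criticality and equimatchability, Lemma~\ref{lemma:genus-bipartite-join} to make the genus additive---but your choice of parameters cannot prove the stated bound. The decisive error is the claim that $2n\approx 4\sqrt{g}$ vertices ``matches the claimed $4\lfloor\sqrt{2g}\rfloor+5$ up to the constants'': the constant is exactly what the theorem asserts, and $4\sqrt{g}$ falls short of $4\sqrt{2g}$ by a factor of $\sqrt{2}$. Quantitatively, if $K_{n,n}$ is to carry essentially all of the genus, then $\lceil (n-2)^2/4\rceil\le g$ forces $n\le 2+2\sqrt{g}$, so your graph has at most $(4+4\sqrt{g})+5=4\sqrt{g}+9$ vertices, while the theorem demands more than $4(\sqrt{2g}-1)+5=4\sqrt{2g}+1$; these are incompatible for every $g\ge 24$. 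Concretely, for $g=25$ your recipe produces $K_{12,12}\cup K_{3,2}$ with $29$ vertices, whereas the theorem requires $4\lfloor\sqrt{50}\rfloor+5=33$. The factor $\sqrt{2}$ comes precisely from splitting the genus \emph{evenly} between the two blocks: the number of vertices of such a block with genus budget $\gamma$ grows like $4\sqrt{\gamma}$, a concave function of $\gamma$, so two blocks of genus about $g/2$ carry about $2\sqrt{2g}$ vertices each, i.e.\ $4\sqrt{2g}+O(1)$ in total. This balanced split is exactly what the paper does ($n$ maximal with $K_{n,n}$ embeddable in genus $\lfloor g/2\rfloor$, $m$ maximal with $K_{m+1,m}$ embeddable in genus $\lceil g/2\rceil$), and it is the idea your plan is missing; the residual device you sketch cannot repair this, since in your scheme the second block always has genus $O(\sqrt{g})$ and hence contributes only $O(g^{1/4})$ vertices.

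On the other hand, you correctly identified a genuine subtlety that the paper itself glosses over: the genera $\gamma(K_{n,n})=\lceil (n-2)^2/4\rceil$ and $\gamma(K_{m+1,m})=\lceil (m-1)(m-2)/4\rceil$ skip many integers, so hitting genus exactly $g$ needs an argument. But your proposed resolution fails: for $g=20$ the maximal block is $K_{10,10}$ of genus $16$, and the residual $4$ is not the genus of any $K_{m+1,m}$ (those genera are $0,1,2,3,5,8,\dots$). Be aware that the paper's proof is equally loose at this point---it asserts $\gamma(K_{n,n})+\gamma(K_{m+1,m})=\lfloor g/2\rfloor+\lceil g/2\rceil=g$, which fails for instance at $g=4$, where its construction has genus $3$---so a complete write-up must treat this issue for both blocks simultaneously; but for your proposal this is secondary to the vertex-count problem above, which no amount of residual bookkeeping can overcome.
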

\begin{proof}
Let $n$ and $m$ be maximum integers such that  $K_{n,n}$ is embeddable in the orientable surface of genus  $\lfloor g/2 \rfloor$ and $K_{m+1,m}$ is embeddable in the orientable surface of  genus $\lceil g/2 \rceil$.
Let $u$ and $v$ be two adjacent vertices of $K_{n,n}$. By Lemma \ref{lemma:genus-bipartite-join} there are two vertices $x$ and $y$ of $K_{m+1,m}$ such that the graph $G$ defined by $G=K_{n,n} \cup K_{m+1,m} \cup \{ux, vy \}$ is $2$-connected with orientable genus $\gamma(K_{n,n}) + \gamma(K_{m+1,m}) = \lfloor g/2 \rfloor + \lceil g/2 \rceil = g$. 
By Lemma \ref{lem:union}, the graph $G$ is equimatchable and factor-critical.

To complete the proof it suffices to bound the number of vertices of $G$ from below by calculating the value of $n$ and $m$.
First suppose that $g$ is even. It is not difficult to verify that $n=\lfloor \sqrt{2g} \rfloor + 2$ and that $m = \lfloor (3+\sqrt{8g+1})/2 \rfloor$. 
Since $\lfloor 2\alpha \rfloor\ge  2\lfloor \alpha \rfloor \ge \lfloor 2\alpha \rfloor -1 $ holds for any positive real number $\alpha$, we get that $K_{m+1,m}$ has  $2m+1 \ge 3+\lfloor \sqrt{8g+1}\rfloor \ge 3+ 2\lfloor\sqrt{2g}\rfloor$ vertices. Consequently, $G$ has at least $4\lfloor \sqrt{2g}\rfloor + 7$ vertices.
If $g$ is odd, then $n=\lfloor \sqrt{2g - 2}\rfloor +2$ and $m=\lfloor (3+\sqrt{8g + 9})/2\rfloor$. Since $\lfloor \sqrt{2g - 2}\rfloor \ge \lfloor \sqrt{2g}\rfloor - 1$ for any positive integer $g$, 
$K_{n,n}$ has   $2(2 +\lfloor \sqrt{2g - 2}\rfloor) \ge 2 + 2\lfloor \sqrt{2g}\rfloor$ vertices. Similarly as in the case of even $g$ we get that $K_{m+1,m}$ has at least $3+ 2\lfloor\sqrt{2g}\rfloor$ vertices. Therefore, $G$ has at least $4\lfloor \sqrt{2g}\rfloor + 5$ vertices, which completes the proof of the orientable case. The nonorientable case is analogous.
\end{proof}

The following four lemmas  enable us to obtain upper bounds on the size of $2$-connected equimatchable factor-critical graphs embeddable in a fixed surface.

\begin{lem}\label{lem:maxComponent}
If $G$ is a 
randomly matchable
graph 
embeddable in the orientable surface of genus $g$ (nonorientable genus $h$), then $|V(G)| \le 4 + 4\sqrt{g}$, respectively $|V(G)|\le 4+2\sqrt{2h}$.
\end{lem}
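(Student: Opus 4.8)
The plan is to apply the Ringel--Youngs genus formulas from Theorem \ref{thm:genus-complete} to the two possible forms of a connected randomly matchable graph, namely $K_{2n}$ and $K_{n,n}$, and in each case solve the resulting inequality for the number of vertices. By \cite{sumner} a connected randomly matchable graph is either $K_{2n}$ or $K_{n,n}$; if $G$ is disconnected then each component is of one of these forms and the genus is additive over components, so the bound for a single component dominates and it suffices to treat the connected case. I therefore reduce immediately to the two explicit families.

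First I would handle $G = K_{2n}$. Theorem \ref{thm:genus-complete} gives $\gamma(K_{2n}) = \lceil (2n-3)(2n-4)/12\rceil$, and since $G$ embeds in $S_g$ we have $\gamma(K_{2n}) \le g$, hence $(2n-3)(2n-4)/12 \le g$, i.e. $(2n-3)(2n-4) \le 12g$. The left side is roughly $(2n)^2$, so solving the quadratic for $n$ yields $2n \le$ something of order $\sqrt{12g} = 2\sqrt{3g}$; I would bound crudely, writing $(2n-4)^2 \le (2n-3)(2n-4) \le 12g$ to get $2n-4 \le 2\sqrt{3g}$ and thus $|V(G)| = 2n \le 4 + 2\sqrt{3g} \le 4 + 4\sqrt g$, the last step using $\sqrt 3 \le 2$. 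Second I would handle $G = K_{n,n}$, where $\gamma(K_{n,n}) = \lceil (n-2)^2/4\rceil \le g$ gives $(n-2)^2 \le 4g$, so $n - 2 \le 2\sqrt g$ and $|V(G)| = 2n \le 4 + 4\sqrt g$. Both families satisfy the claimed orientable bound, so the maximum of the two is $4 + 4\sqrt g$.

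For the nonorientable case the argument is identical in structure, using the nonorientable formulas. For $K_{2n}$ one has $\tilde\gamma(K_{2n}) = \lceil (2n-3)(2n-4)/6\rceil \le h$, giving $(2n-4)^2 \le 6h$ and $|V(G)| = 2n \le 4 + \sqrt{6h} \le 4 + 2\sqrt{2h}$ (since $\sqrt 6 \le 2\sqrt 2$); the exceptional value $\tilde\gamma(K_7)=3$ concerns an odd complete graph and so does not arise here. For $K_{n,n}$ one has $\tilde\gamma(K_{n,n}) = \lceil (n-2)^2/2\rceil \le h$, giving $(n-2)^2 \le 2h$ and $|V(G)| = 2n \le 4 + 2\sqrt{2h}$. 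Taking the maximum over both families yields $|V(G)| \le 4 + 2\sqrt{2h}$.

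The only real subtlety, and the step I would be most careful about, is keeping the inequalities clean: the ceilings in Theorem \ref{thm:genus-complete} must be dropped in the favorable direction (dropping a ceiling only decreases the quantity, so $\lceil t\rceil \le g$ implies $t \le g$), and the quadratic estimates must be arranged so that the looser bound $(2n-4)^2 \le (2n-3)(2n-4)$ is genuinely valid, which requires $2n - 3 \ge 2n - 4$, true for all $n$, together with both factors nonnegative, i.e. $n \ge 2$; the small cases $n \le 1$ are checked directly against the stated bound. Everything else is routine algebraic manipulation, and since the paper explicitly states that the nonorientable proofs are omitted when analogous, I would present the orientable computation in full and remark that the nonorientable case follows by the same argument with the nonorientable formulas.
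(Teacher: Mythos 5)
Your treatment of the connected case is correct and is essentially the paper's own proof: apply Theorem \ref{thm:genus-complete} to $K_{2n}$ and to $K_{n,n}$ and solve for the number of vertices. (The paper solves the quadratic for the complete graph exactly, getting $|V(G)|\le (7+\sqrt{1+48g})/2$, and then checks $(7+\sqrt{1+48g})/2 \le 4+4\sqrt{g}$; your cruder estimate $(2n-4)^2\le (2n-3)(2n-4)\le 12g$ reaches the same bound and is equally valid, including your handling of the $K_7$ exception and the small cases $n\le 1$.)

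However, your reduction of the disconnected case to the connected case is a genuine error. Additivity of genus over components gives you $|V(C_i)|\le 4+4\sqrt{g_i}$ for each component $C_i$ of genus $g_i$, but vertex counts \emph{add} across components while the bound $4+4\sqrt{g}$ is concave in $g$ and carries an additive constant $4$ per component, so the per-component bounds do not combine into $|V(G)|\le 4+4\sqrt{g}$; it is not true that ``the bound for a single component dominates.'' Concretely, $K_4\cup K_4$ is randomly matchable (every maximal matching is perfect), planar, and has $8 > 4 = 4+4\sqrt{0}$ vertices, so the statement you set out to prove for disconnected graphs is in fact false, and no repair of that step exists. The lemma must be read, as the paper's proof implicitly does and as its only application requires, as a statement about connected randomly matchable graphs: it is invoked solely for $G'=G\setminus(V(M_v)\cup\{v\})$, which Theorem \ref{thm:main} shows is $K_{2n}$ or $K_{n,n}$. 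Deleting your disconnected reduction (rather than attempting it) leaves a proof that is correct and coincides with the paper's.
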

\begin{proof}
If $G$ is a complete graph embeddable in the orientable surface of genus $g$, then $|V(G)|\leq (7+\sqrt{1+48g})/2$ by Theorem \ref{thm:genus-complete}. If $G$ is a complete regular bipartite embeddable in the orientable surface of genus $g$, then $|V(G)| \leq 4+4\sqrt{g}$ by Theorem \ref{thm:genus-complete}.
The 
inequality $(7+\sqrt{1+48g})/2 \leq 4+4\sqrt{g}$,  
which holds for any $g\ge 0$, implies the result in the orientable case. The proof of the nonorientable case is analogous.
\end{proof}

\begin{lem}\label{lem:minimalDegree}
If $G$ has 
a cellular embedding in a surface $S$ and
more than
$$
\frac{6\chi(S)}{5-d}
$$
vertices for some $d\ge 6$, then $\delta(G)\le d$.
\end{lem}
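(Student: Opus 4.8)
The plan is to prove the contrapositive: assuming $\delta(G) > d$, I would show that $G$ has at most $\frac{6\chi(S)}{5-d}$ vertices. The three ingredients are the Euler--Poincar\'e formula, Proposition~\ref{prop:2q3r}, and the handshake lemma, each of which is available to us.

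First I would fix the notation $p=|V(G)|$, $q=|E(G)|$, and $r$ for the number of faces of the cellular embedding, so that $p-q+r=\chi(S)$. Substituting the bound $r\le \tfrac{2q}{3}$ from Proposition~\ref{prop:2q3r} into Euler's formula and rearranging would yield the standard sparsity estimate $q\le 3p-3\chi(S)$. On the other side, the handshake lemma gives $2q=\sum_{v}\deg(v)\ge p\,\delta(G)$. Combining these two inequalities eliminates $q$ and produces the single key inequality
\[
p\,(\delta(G)-6)\le -6\chi(S),
\]
which holds for every simple graph cellularly embedded in $S$, irrespective of its minimum degree.

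With this inequality in hand, the remaining work is bookkeeping with signs. Since $d\ge 6$ and, under the contrapositive assumption, $\delta(G)>d$, the factor $\delta(G)-6$ is strictly positive; using that $\delta(G)$ is an integer exceeding $d$, hence $\delta(G)\ge d+1$, I would replace $\delta(G)-6$ by the smaller positive quantity $d-5$ to obtain $p\,(d-5)\le -6\chi(S)$, and then divide by $d-5>0$ to reach $p\le \frac{-6\chi(S)}{d-5}=\frac{6\chi(S)}{5-d}$, contradicting the hypothesis $p>\frac{6\chi(S)}{5-d}$.

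The step I expect to require the most care is the sign handling at the end, because $5-d<0$ while $\chi(S)$ may be positive (e.g. the sphere, where the claimed threshold is negative and the conclusion is then vacuously forced), zero, or negative. Tracking this so that the division by $d-5$ preserves the correct direction of the inequality, together with the passage from the strict bound $\delta(G)>d$ to $\delta(G)\ge d+1$ that justifies trading $\delta(G)-6$ for $d-5$, is the only genuinely delicate point; the derivation of the key inequality itself is routine.
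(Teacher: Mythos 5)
Your proposal is correct and follows essentially the same route as the paper: negate the conclusion to get $\delta(G)\ge d+1$, then combine the Euler--Poincar\'e formula, the face bound $2q\ge 3r$ of Proposition~\ref{prop:2q3r}, and the handshake inequality $2q\ge p\,\delta(G)$ to force $p\le \frac{6\chi(S)}{5-d}$, contradicting the hypothesis. The only difference is cosmetic bookkeeping --- you first eliminate $q$ via the sparsity bound $q\le 3p-3\chi(S)$, while the paper substitutes $p\le \frac{2q}{d+1}$ and $r\le\frac{2q}{3}$ into Euler's formula and then converts back from $q$ to $p$ --- and your explicit attention to the sign of $5-d$ matches the step the paper performs implicitly when it reverses the inequality.
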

\begin{proof}
We prove the lemma by contradiction. 
Suppose that $\delta(G) \ge d+1$ and consider an embedding of $G$ in the surface $S$. Denote by $p$, $q$, and $r$ the number of vertices and  edges of $G$ and the number of faces of the embedding, respectively. 
As $\delta(G) \geq d+1$ we have $2q\geq (d+1) p$.
Since $G$ is a simple graph,  $2q\ge 3r$ holds by Proposition~\ref{prop:2q3r}.
Substituting the expressions for $p$ and $r$ from the  last two inequalities into 
Euler-Poincar\'e formula  yields
$$
\chi(S) = p - q + r \leq \frac{2q}{d+1} - q + \frac{2q}{3} = \frac{q(5-d)}{3(d+1)}. 
$$
Using $d\geq 6$ and $2q\geq (d+1) p$ we have
$$
\frac{q(5-d)}{3(d+1)} \leq \frac{p(d+1)}{2}\cdot \frac{5-d}{3(d+1)}
$$
and therefore
$$
\chi(S) \leq \frac{p(5-d)}{6},
$$
which contradicts the assumption of the lemma.
\end{proof}

\begin{lem}\label{lem:maxVerticesByDegree}
Let $G$ be a 2-connected, factor-critical equimatchable graph embeddable in the  surface with orientable  genus $g$, respectively nonorientable genus $h$.
If $G$ has a vertex of degree at most $d$, 
then $|V(G)|\leq 5+2d+4\sqrt{g}$, respectively $|V(G)|\leq 5+2d+2\sqrt{2h}$.
\end{lem}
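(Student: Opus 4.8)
The plan is to combine the isolating-matching machinery of Lemma~\ref{lem:isolation} with our main theorem and the size bound for randomly matchable graphs. Fix a vertex $v$ with $\mathrm{deg}(v)\le d$. By Lemma~\ref{lem:isolation} there is a matching isolating $v$ of size at most $\mathrm{deg}(v)\le d$; passing to a minimal subset of it that still isolates $v$ yields a \emph{minimal} matching $M_v$ isolating $v$ with $|M_v|\le d$, and hence $|V(M_v)|\le 2d$.

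Next I would apply Theorem~\ref{thm:main} to the triple $(G,v,M_v)$. Since $G$ is $2$-connected, factor-critical, and equimatchable, the graph $G':=G\setminus(V(M_v)\cup\{v\})$ is isomorphic to $K_{2n}$ or $K_{n,n}$ for some nonnegative integer $n$, and in particular $G'$ is randomly matchable. As $G'$ is a subgraph of $G$, it inherits an embedding in the surface of orientable genus $g$, so Lemma~\ref{lem:maxComponent} gives $|V(G')|\le 4+4\sqrt{g}$.

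Finally I would count vertices. The sets $\{v\}$, $V(M_v)$, and $V(G')$ are pairwise disjoint and partition $V(G)$, hence
\[
|V(G)| = 1 + |V(M_v)| + |V(G')| \le 1 + 2d + (4+4\sqrt{g}) = 5 + 2d + 4\sqrt{g},
\]
which is the claimed bound. The nonorientable case is identical, invoking the estimate $|V(G')|\le 4+2\sqrt{2h}$ from Lemma~\ref{lem:maxComponent} in place of the orientable one to obtain $|V(G)|\le 5+2d+2\sqrt{2h}$.

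There is no genuinely hard step once the earlier results are in place: the whole argument hinges on Theorem~\ref{thm:main} guaranteeing that deleting an isolating matching together with its isolated vertex leaves a \emph{single} randomly matchable component, after which the count is immediate. The only point that requires a little care is to ensure the matching fed into Theorem~\ref{thm:main} is minimal while still having size at most $d$; this is exactly what passing from the matching supplied by Lemma~\ref{lem:isolation} to a minimal isolating subset accomplishes. I expect this bookkeeping, rather than any real combinatorial difficulty, to be the only thing needing explicit justification.
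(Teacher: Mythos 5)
Your proof is correct and follows essentially the same route as the paper: isolate a low-degree vertex via Lemma~\ref{lem:isolation}, apply Theorem~\ref{thm:main} to see that $G\setminus(V(M_v)\cup\{v\})$ is a single randomly matchable graph, bound its size by Lemma~\ref{lem:maxComponent}, and count. Your explicit step of passing from the matching of Lemma~\ref{lem:isolation} to a minimal isolating subset is in fact slightly more careful than the paper's own wording, which tacitly assumes the minimal matching it fixes has size at most $d$.
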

\begin{proof}
Let $v$ be a vertex of $G$ with degree $d$ in $G$ and
 $M_v$ a minimal matching that isolates $v$. By Lemma~\ref{lem:isolation} $M_v$ covers at most $2d$ vertices. 
Let $G'=G\setminus(V(M_v)\cup \{v\})$. By Theorem~\ref{thm:main} $G'$ has at most one component, this component is randomly matchable,  
and Lemma~\ref{lem:maxComponent} yields that $|V(G')| \leq 4+4\sqrt{g}$, respectively $|V(G')| \le 4 +2\sqrt{2h}$. Hence $G$ is a union of vertex $v$, matching $M_v$, and $G'$, and in the orientable case we have 
$$|V(G)|
=|\{v\}| + |V(M_v)| + |V(G')| \le 1 + 2d + |V(G')| 
\leq 1+2d+4+4\sqrt{g} \le 5+2d+4\sqrt{g}.$$
In the nonorientable case $|V(G)|
\le 1 + 2d + |V(G')|  \le 5+2d+2\sqrt{2h}$, which completes the proof.
\end{proof}

\begin{lem}\label{lem:upperbound}
For any $d\ge 6$ such that
$$\frac{6\left(2-2g\right)}{5-d} \leq 5+2d+4\sqrt{g}, 
\quad \textrm{respectively} \quad 
\frac{6\left(2-h\right)}{5-d_0} \leq 5+2d_0+2\sqrt{2h},$$ 
the maximum size of a $2$-connected factor-critical equimatchable graph embeddable in the surface with orientable
genus $G$ (nonorientable genus $h$) 
is at  most $5+2d+4\sqrt{g}$, respectively $5+2d+2\sqrt{2h}$ vertices. 
\end{lem}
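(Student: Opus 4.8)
The plan is to combine the two preceding lemmas in a short argument by contradiction. Let $G$ be a $2$-connected factor-critical equimatchable graph embeddable in the orientable surface of genus $g$, and suppose toward a contradiction that $|V(G)| > 5 + 2d + 4\sqrt{g}$; the aim is to exhibit a vertex of small degree so that Lemma~\ref{lem:maxVerticesByDegree} applies and yields the opposite bound. First I would pass to a cellular embedding: since $G$ is embeddable in $S_g$, its genus satisfies $\gamma(G)\le g$, and a minimum-genus embedding of $G$ is cellular by the definition of genus. Let $S$ be its surface, so that $\chi(S) = 2 - 2\gamma(G)$. This reduction is needed because Lemma~\ref{lem:minimalDegree} requires a cellular embedding, whereas an arbitrary embedding of $G$ into $S_g$ need not be cellular when $\gamma(G) < g$.

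Next I would compare the threshold of Lemma~\ref{lem:minimalDegree} for this embedding with the quantity $5 + 2d + 4\sqrt{g}$. The only subtlety here is the sign of $5 - d$: since $d \ge 6$ we have $5 - d < 0$, so dividing $2 - 2\gamma(G) \ge 2 - 2g$ by $5 - d$ reverses the inequality. Combined with the hypothesis of the lemma and the contradiction assumption, this gives
$$\frac{6\chi(S)}{5 - d} = \frac{6(2 - 2\gamma(G))}{5 - d} \le \frac{6(2 - 2g)}{5 - d} \le 5 + 2d + 4\sqrt{g} < |V(G)|.$$
Hence $G$ has more than $\frac{6\chi(S)}{5-d}$ vertices, and Lemma~\ref{lem:minimalDegree} yields $\delta(G) \le d$, that is, $G$ has a vertex of degree at most $d$.

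Finally, since $G$ is a $2$-connected factor-critical equimatchable graph with a vertex of degree at most $d$ that is embeddable in $S_g$, Lemma~\ref{lem:maxVerticesByDegree} gives $|V(G)| \le 5 + 2d + 4\sqrt{g}$, contradicting the assumption $|V(G)| > 5 + 2d + 4\sqrt{g}$. This contradiction establishes the orientable bound. The nonorientable case is identical, using $\chi(N_h) = 2 - h$, the nonorientable hypothesis, and the nonorientable conclusions of Lemmas~\ref{lem:minimalDegree} and~\ref{lem:maxVerticesByDegree}, and I would omit it as the paper does elsewhere. The only real obstacle is the bookkeeping forced by the negative denominator $5 - d$ together with the reduction to a minimum-genus (hence cellular) embedding; once these are handled correctly, the result is immediate from the cited lemmas.
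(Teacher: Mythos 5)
Your proof is correct and follows essentially the same route as the paper's: assume $|V(G)| > 5+2d+4\sqrt{g}$, invoke Lemma~\ref{lem:minimalDegree} to obtain a vertex of degree at most $d$, and then apply Lemma~\ref{lem:maxVerticesByDegree} to reach a contradiction. In fact your write-up is slightly more careful than the paper's, which silently applies Lemma~\ref{lem:minimalDegree} with $\chi(S)=2-2g$ even though an embedding in $S_g$ need not be cellular; your reduction to a minimum-genus (hence cellular) embedding, together with the sign-reversal observation $\frac{6(2-2\gamma(G))}{5-d}\le\frac{6(2-2g)}{5-d}$ for $d\ge 6$, closes that small gap.
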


\begin{proof}
We prove the lemma by contradiction.
    Let $d$ be an integer such that $d\ge 6$ and let $G$ be a $2$-connected factor-critical equimatchable graph
embeddable in the orientable surface of genus $g$ with  $|V(G)|>5+2d+4\sqrt{g}$. 
    By our assumption $$|V(G)|>\frac{6(2-2g)}{5-d}$$ 
	and thus
    by Lemma~\ref{lem:minimalDegree} $G$ has a vertex with degree $d'$ such that $d'\leq d$.
    Consequently, by Lemma~\ref{lem:maxVerticesByDegree} $G$ has at most 
    $5+2d'+4\sqrt{g} \leq 5+2d+4\sqrt{g}$ vertices, which is a contradiction. The nonorientable case is analogous.
\end{proof}

\begin{theorem}\label{theorem:mainResult}
Let $m(g)$, respectively $\tilde m(h)$, denote the maximum number of vertices of a $2$-connected factor-critical equimatchable graph embeddable in the orientable surface of  genus $g$, respectively nonorientable surface of genus $h$. Then the following inequalities hold.\\
\textit{i)} 
If $g\le 2$ and $h\le 2$, then 
$$
4\sqrt{2g} + 1 \le m(g) \leq 4\sqrt{g} + 17 
\quad \textrm{and} \quad
4 \sqrt{h} +1 \le \tilde m(h) \le 2\sqrt{2h} +17.
$$
\textit{ii)} If $g\geq 3$ and $h\geq 3$, then 
$$
4\sqrt{2g} + 1 \leq m(g) \leq c_g\sqrt{g} + 5
\quad \textrm{and} \quad
4\sqrt{h} + 1 \leq \tilde m(h) \leq \tilde c_h\sqrt{h} + 5, 
$$ 
where $c_g\leq 12$ and $\tilde c_{h}\leq 10$ are positive real constants such that the sequences
$\left(c_g\right)_{g=3}^{\infty}$ and $\left(\tilde c_h\right)_{h=3}^{\infty}$ are decreasing, $\lim_{g\rightarrow \infty}c_g = 2\sqrt{7}+2<7.3$, and $\lim_{h\rightarrow \infty}\tilde c_h = \sqrt{2}\left(\sqrt{7}+1\right)<5.2.$
\end{theorem}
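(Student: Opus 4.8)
The plan is to read the lower bounds off the explicit construction already in hand and to derive the upper bounds by choosing the free parameter $d$ in Lemma~\ref{lem:upperbound} optimally. For the lower bounds there is essentially nothing to do: Theorem~\ref{thm:lowerBound} supplies a $2$-connected factor-critical equimatchable graph of orientable genus $g$ with at least $4\lfloor\sqrt{2g}\rfloor+5$ vertices and one of nonorientable genus $h$ with at least $4\lfloor\sqrt h\rfloor+5$ vertices, and since $\lfloor x\rfloor\ge x-1$ these quantities are at least $4\sqrt{2g}+1$ and $4\sqrt h+1$. Hence $m(g)\ge 4\sqrt{2g}+1$ and $\tilde m(h)\ge 4\sqrt h+1$ in both parts, with no further work required.

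For the upper bounds I would substitute the best integer $d$ into Lemma~\ref{lem:upperbound}. With $\chi=2-2g$ and $d>5$, clearing the negative denominator $5-d$ turns its admissibility condition, for $g\ge 1$, into the quadratic inequality
$$12(g-1)\le(d-5)\big(5+2d+4\sqrt g\big),$$
whose left side is fixed while the right side increases in $d$, so it holds precisely when $d$ is at least the positive root $D(g)$ of the associated quadratic. The smallest admissible integer is then $d_g=\max\{6,\lceil D(g)\rceil\}$, and Lemma~\ref{lem:upperbound} gives $m(g)\le 5+2d_g+4\sqrt g$. For $g\le 2$ a direct check shows $d=6$ is admissible, and substituting it yields the clean bound $5+12+4\sqrt g=4\sqrt g+17$, which is part~(i); the nonorientable case is identical with $\chi=2-h$ and $d_0=6$, giving $2\sqrt{2h}+17$.

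For part~(ii), with $g\ge 3$, writing the bound $5+2d_g+4\sqrt g$ in the form $c_g\sqrt g+5$ forces $c_g=4+2d_g/\sqrt g$. The limit is a routine asymptotic computation: inserting the ansatz $D(g)=\alpha\sqrt g+\beta+o(1)$ into $12(g-1)=(D-5)(5+2D+4\sqrt g)$ and matching the leading term gives $\alpha(2\alpha+4)=12$, so $\alpha=\sqrt 7-1$ and $c_g\to 4+2(\sqrt 7-1)=2\sqrt 7+2<7.3$. The nonorientable analogue replaces the leading relation by $\alpha(2\alpha+2\sqrt 2)=6$, whence $\alpha=(\sqrt 7-1)/\sqrt 2$ and $\tilde c_h\to\sqrt 2(\sqrt 7+1)<5.2$.

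The hard part will be the two monotonicity-flavoured assertions, that $(c_g)_{g\ge 3}$ is decreasing and that $c_g\le 12$. The subtlety is that $d_g$ is an integer, so the raw coefficient $4+2d_g/\sqrt g$ jumps upward whenever $\lceil D(g)\rceil$ increments and is therefore \emph{not} monotone: for instance it increases from $g=5$ to $g=6$. I would circumvent this by defining $c_g$ to be the least non-increasing majorant of the raw coefficients, $c_g=\sup_{g'\ge g}(4+2d_{g'}/\sqrt{g'})$; this is non-increasing and dominates the true bound by construction, and it inherits the limit $2\sqrt 7+2$ since the raw coefficient has that limit (the approach being from above, as the next-order term $\beta=(5\alpha+20)/\big(4(\alpha+1)\big)$ is positive). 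It then remains to bound this supremum by $12$, which is governed by the small genera where the raw coefficient is largest. The tightest case is $g=3$: there $d_3=7$ and the raw estimate $5+2\cdot 7+4\sqrt 3\approx 25.93$ alone would give a coefficient just above $12$, so I would invoke the integrality of $|V(G)|$ to round the bound down to $25\le 12\sqrt 3+5$, restoring $c_3\le 12$. A direct check of the remaining small genera, combined with the continuous estimate $4+2D(g)/\sqrt g$ controlling the large-$g$ range, then confirms $\sup_{g'\ge 3}\le 12$. The nonorientable case is analogous and easier, since $d=6$ already suffices at $h=3$ and gives $\tilde c_3\le 10$ without any rounding.
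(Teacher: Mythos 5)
Your proposal is correct, and for the lower bounds and part (i) it coincides with the paper's argument (the paper phrases (i) directly through Lemmas \ref{lem:minimalDegree} and \ref{lem:maxVerticesByDegree} with $d=6$, which is exactly your admissibility check). In part (ii), however, you take a genuinely different route. The paper does not restrict $d$ to integers: it solves condition (\ref{eq}) exactly over the reals, takes $d_g=\frac{1}{4}\bigl(5-4\sqrt g+\sqrt{112g+120\sqrt g+129}\bigr)$, and reads off the closed form $c_g=\frac{1}{2\sqrt g}\bigl(5+4\sqrt g+\sqrt{112g+120\sqrt g+129}\bigr)$, which is smooth, strictly decreasing, has $c_3\approx 10.93<12$, and tends to $2\sqrt 7+2$; consequently the ceiling jumps, the non-monotonicity, the $g=3$ overshoot, and the integrality patch that occupy most of your proof never arise there. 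What your heavier route buys is rigor: Lemma \ref{lem:minimalDegree} is proved by negating $\delta(G)\le d$ to $\delta(G)\ge d+1$, which is valid only for integer $d$ (indeed the proof of Lemma \ref{lem:upperbound} begins ``let $d$ be an integer''), so the paper's substitution of the irrational $d_g$ into Lemma \ref{lem:upperbound} is not actually covered by the lemmas as proved. The gap is not cosmetic: for non-integer $d$ the conclusion of Lemma \ref{lem:minimalDegree} can fail --- the $1$-skeleton of the Klein map, a $7$-regular triangulation of $S_3$ on $24$ vertices, has $24>6\chi(S_3)/(5-d)\approx 23.93$ for $d=d_3\approx 6.003$ yet minimum degree $7>d$ --- and the honest conclusion for real $d$ is $\delta(G)\le\lceil d\rceil$, which leads precisely to your bound $5+2\lceil d_g\rceil+4\sqrt g$ rather than the paper's $5+2d_g+4\sqrt g$. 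Your repairs are all valid: the smallest admissible integer $d_3=7$, the non-increasing majorant, the rounding $m(3)\le\lfloor 5+14+4\sqrt 3\rfloor=25\le 12\sqrt 3+5$, and both limit computations check out numerically (including the increase of the raw coefficient from $g=5$ to $g=6$). Two loose ends remain on your side: your majorant is only weakly decreasing (it is constant, for instance, from $g=5$ to $g=6$), so if ``decreasing'' is read strictly you should add a strictly decreasing null perturbation such as $1/g$, which the slack $c_3\approx 11.55<12$ permits; and the assertion $\sup_{g\ge 4}\bigl(4+2\lceil D(g)\rceil/\sqrt g\bigr)<12$ deserves its one-line justification, namely $4+2\lceil D(g)\rceil/\sqrt g\le\bigl(4+2D(g)/\sqrt g\bigr)+2/\sqrt g$, where the smooth term is decreasing in $g$ and hence at most its value $\approx 10.4$ at $g=4$, giving a bound below $11.5$ for all $g\ge 4$.
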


\begin{proof}
The lower bounds follow from Theorem~\ref{thm:lowerBound} and the inequality $\lfloor \alpha \rfloor > \alpha -1$ which holds for any  real number $\alpha$.
To prove the upper bounds, we distinguish two cases.

\noindent \textit{i)} 
From Lemma~\ref{lem:minimalDegree} follows that if $G$ has more than $ 12\left(g-1\right)$ vertices, then it has a vertex of degree at most $6$, and hence by Lemma~\ref{lem:maxVerticesByDegree} at most  $17 + 4\sqrt{g}$ vertices. The proof is concluded by observing that $ 17 + 4\sqrt{g} > 12\left(g-1\right)$ holds for any $g\leq 2$. 
The nonorientable case is analogous.

\noindent \textit{ii)} 
We start by determining the smallest $d$ such that $d\ge 6$ and
\begin{equation}
\label{eq}
\frac{6\left(2-2g\right)}{5-d} \leq 5+2d+4\sqrt{g}
\end{equation} 
for a fixed integer $g\ge 3$.
Solving (\ref{eq}) for $d$ we get that
$$
d_{g}=\frac{5-4\sqrt{g}+\sqrt{112g+120\sqrt{g}+129}}{4}
$$ 
is minimal such $d$ and it is easy to verify that for $g\ge 3$ is indeed $d_g \ge 6$.
Therefore, by Lemma \ref{lem:upperbound} $m(g)\leq 5+2d_g+4\sqrt{g}$. 
Clearly,
for the sequence $(c_g)_{g=3}^{\infty}$ defined by 
$$
c_{g}=\frac{5+4\sqrt{g}+\sqrt{112g+120\sqrt{g}+129}}{2\sqrt g}
$$
$m(g)\le c_g\sqrt g +5$ for every $g\ge 3$. It can be verified by standard methods that the sequence is decreasing and has the claimed limit, which completes the proof of the orientable case.
The nonorientable case is analogous.
\end{proof}

In the investigation of $3$-connected equimatchable graphs embeddable in a fixed surface Kawarabayashi and Plummer \cite{KP} proved that there is no such bipartite graph embeddable with face-width at least $3$ at all.
It is easy to see that there are arbitrarily large planar bipartite $2$-connected equimatchable graphs.

\begin{prop}
For any positive integer $k$ there is a planar $2$-connected bipartite equimatchable graph with at least $k$ vertices.
\end{prop}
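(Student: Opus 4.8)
The plan is to exhibit a single explicit infinite family, namely the complete bipartite graphs $K_{2,n}$, and verify that each of them has all four required properties for every $n\ge 2$. First I would record that $K_{2,n}$ has $n+2$ vertices, so choosing $n\ge\max\{k,2\}$ immediately yields at least $k$ vertices; the remaining work is purely to confirm planarity, bipartiteness, $2$-connectivity, and equimatchability.

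Planarity and bipartiteness are essentially free. The graph $K_{2,n}$ is bipartite by definition, with partite sets $\{a,b\}$ and $\{u_1,\dots,u_n\}$. A planar embedding is obtained by drawing the $n$ internally disjoint paths $a\,u_i\,b$ as $n$ nested arcs joining $a$ and $b$, i.e.\ as a generalized theta graph, which is manifestly planar. For $2$-connectivity I would check that deleting any single vertex leaves the graph connected: deleting $a$ or $b$ leaves a star $K_{1,n}$, and deleting some $u_i$ leaves $K_{2,n-1}$, which is connected as soon as $n\ge 2$. Hence $K_{2,n}$ is $2$-connected for every $n\ge 2$.

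Equimatchability is the only property with genuine content, and it follows directly from Proposition~\ref{prop:bip-equi}, which states that every complete bipartite graph is equimatchable; for $m=2\le n$ the maximum matching has size $\min\{2,n\}=2$. For completeness one may also argue by hand: any matching of size at most $1$ covers at most one of $a,b$, so at least one of these two vertices remains adjacent to an uncovered vertex on the $n$-side and can still be matched. Thus no matching of size $1$ is maximal, every maximal matching has size $2$, and $K_{2,n}$ is equimatchable.

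I expect no step to present a real obstacle: the substantive fact, equimatchability, is already supplied by Proposition~\ref{prop:bip-equi}, and the topological and connectivity checks for $K_{2,n}$ are elementary. Taking $G=K_{2,\max\{k,2\}}$ therefore completes the proof.
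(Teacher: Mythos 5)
Your proof is correct and uses exactly the same family as the paper: the paper's proof simply takes $K_{k,2}$ (the same graphs as your $K_{2,n}$) and asserts the required properties without detail, whereas you spell out the planar embedding, $2$-connectivity, and the appeal to Proposition~\ref{prop:bip-equi} for equimatchability. No discrepancy in substance; yours is just a fully verified version of the same argument.
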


\begin{proof}
Clearly, for any integer $k\ge 2$ the complete bipartite graph $K_{k,2}$ 
has the desired properties.
\end{proof}

The following theorem  shows that there are infinitely-many $2$-connected bipartite equimatchable graphs with any given genus and face-width. 

\begin{theorem}
\label{thm:2conn-bip}
For any positive integers $n$,  $g$, and $k$ there exists a $2$-connected  bipartite  equimatchable graph $G$ 
with at least $n$ vertices, orientable genus $g$, and an embedding in $S_g$ with face-width $k$.
Similarly, for any positive integers $n$,  $h$, and $k$ there exists a $2$-connected  bipartite  equimatchable graph $\tilde G$ 
with at least $n$ vertices, nonorientable genus $h$, and an embedding in $N_h$ with face-width $k$.
\end{theorem}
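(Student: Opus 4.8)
The plan is to give an explicit construction that exhibits simultaneous control over three quantities: the number of vertices (at least $n$), the genus (exactly $g$ orientable, resp.\ $h$ nonorientable), and the face-width (exactly $k$). These requirements pull in different directions, so the natural strategy is to start from a flexible ``base'' graph whose equimatchability and bipartiteness are easy to verify, and then modify it in controlled ways to hit the genus and face-width targets. Since complete bipartite graphs $K_{m,2}$ are bipartite, $2$-connected, and equimatchable (by Proposition~\ref{prop:bip-equi}, with a short argument that any maximal matching has size $2$), and can be made arbitrarily large, they are a promising seed for the size requirement; the difficulty is that they are planar, so genus and face-width must be injected by a further operation.

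First I would establish a gadget that raises genus and face-width while preserving the three structural properties. A standard device is to take a large grid-like or toroidal-quadrangulation-type bipartite graph (for instance a Cartesian-product structure such as $C_{2a}\,\square\,C_{2b}$, which is bipartite, $2$-connected, quadrangulates a surface, and has tunable face-width) and then argue its equimatchability separately. Because equimatchability is the delicate property, I would aim to reduce to a setting where it can be checked directly: the key lemma to prove is that the chosen building block is equimatchable, which for a bipartite graph means showing every maximal matching saturates the smaller side. The cleanest route is to keep one partite side small (size $2$) throughout, as in $K_{m,2}$, so that a maximal matching can fail to be maximum only in a very restricted way; one then verifies that no maximal matching leaves both vertices of the small side unsaturated.

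The second step is to control face-width and genus simultaneously. I would embed the base graph so that it quadrangulates $S_g$ (respectively $N_h$), using Theorem~\ref{thm:genus-complete} or an explicit rotation system to pin the genus exactly, and then increase the face-width to the prescribed $k$ by subdividing or by inserting additional ``belts'' of vertices around noncontractible cycles, each belt raising the minimum number of faces meeting a noncontractible cycle by one while preserving bipartiteness and, crucially, equimatchability. Inserting such belts also increases the vertex count, which helps meet the $\ge n$ requirement; if more vertices are still needed, one can pad the planar part (for example by enlarging the small-side construction) without affecting genus or face-width. Throughout I would keep the genus fixed by ensuring every added piece is embedded within existing faces or along existing handles, invoking the fact that adding edges or vertices inside a single face cannot decrease genus and, when done cellularly, does not increase it.

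The hard part will be the simultaneous preservation of equimatchability under the face-width-boosting modifications: equimatchability is not monotone under adding vertices or edges, so each operation in the construction must be accompanied by a proof (in the spirit of the case analysis in Lemma~\ref{lem:union}) that every maximal matching of the enlarged graph still extends to, or already is, a maximum matching. I expect the cleanest formulation is to fix the small partite side to have exactly two vertices and show that the combinatorial structure forces any maximal matching to saturate both of them, so that maximality implies maximum size; the genus and face-width are then free parameters tuned by the construction of the large side. The nonorientable case would follow by the analogous construction using crosscaps in place of handles, as the authors do elsewhere in the paper.
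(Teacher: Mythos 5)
Your proposal has two concrete, fatal gaps. First, the building block you propose, a toroidal quadrangulation such as $C_{2a}\,\square\,C_{2b}$, is provably \emph{not} equimatchable: it is connected and has a perfect matching, so if it were equimatchable it would be randomly matchable, and by Sumner's theorem (the characterisation used throughout this paper) it would have to be $K_{2n}$ or $K_{n,n}$ --- which it is not, being bipartite, $4$-regular, and having more than $8$ vertices for $a,b\ge 2$. So the ``key lemma'' your plan rests on is false, and no care in the embedding step can rescue it. Second, your fallback --- fixing the smaller partite side to have exactly two vertices, as in $K_{m,2}$, and then tuning genus and face-width ``by the construction of the large side'' --- is impossible: a simple bipartite graph with a partite set of size $2$ is a subgraph of $K_{2,m}$ and hence planar, so its genus is $0$ no matter what the large side looks like. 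The two features you try to combine (a tiny partite side to make equimatchability easy, and a large side to create genus) are mutually exclusive. Beyond these two errors, the proposal never resolves what you yourself identify as the hard part, namely preservation of equimatchability under the face-width-boosting ``belt'' insertions; since equimatchability is not monotone under such operations, the write-up remains a plan rather than a proof even where it is not outright wrong.

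The paper's proof avoids all of this by not attempting to preserve equimatchability through modifications of an equimatchable seed. It starts from an \emph{arbitrary} $2$-connected triangulation $G'$ of the prescribed genus with a genus embedding of face-width $k$ and at least $n$ vertices (such triangulations exist), replaces every edge of $G'$ by $l\ge 2$ parallel edges and subdivides each of them, and then verifies equimatchability of the resulting bipartite graph directly from the Lesk--Plummer--Pulleyblank criterion (\cite[Theorem 3]{LPP}): for each original vertex $v$ (these form the smaller partite set), the set $X_v$ of the $l$ subdivision vertices on one bundle at $v$ satisfies $\N(X_v)=\{u,v\}$, so $|\N(X_v)|\le 2\le l=|X_v|$. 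Multiplying and subdividing edges changes neither the genus nor the face-width, so all three parameters are inherited from $G'$. The criterion of \cite{LPP}, which you never invoke, is exactly the missing tool in your approach: it converts equimatchability of a bipartite graph into a local neighbourhood condition, and that is what makes an ``arbitrary topology plus gadgets'' construction feasible at all.
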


\begin{proof}
We prove only the orientable case, since the nonorientable case is analogous. Take a $2$-connected graph $G'$ with at least $n$ vertices,  genus $g$, and with a genus embedding $\Pi'$ with face-width $k$, for example  any sufficiently large $2$-connected triangulation with a given genus and face-width; it is well known that such triangulations exist.
We construct the desired graph $G$ starting from $G'$ by replacing every edge $e$ of $G'$ by $l$ parallel edges $e_1, \dots, e_l$ for some fixed $l\ge 2$ and subdividing every edge $e_i$ by a new vertex $y_{e_i}$. 
Denote by $B$ the set of all vertices $y_{e_i}$ of $G$, that is, $B = \{y_{e_i}; e\in E(G'), 1\le i \le l \}$. Let $A = V(G)\setminus B$.

Clearly, $G$ is bipartite and the vertices of $A$ form the smaller partite set of $G$. 
By \cite[Theorem 3]{LPP} a connected bipartite graph is equimatchable if and only if for any vertex $u$ from the smaller partite set 
 there exists a non-empty $X\subseteq \N(u)$ such that $|\N(X)|\leq|X|$.
We prove that $G$ is equimatchable by exhibiting such set $X_v$ for every vertex $v$ from $A$.
If a vertex $v$ is in $G'$ incident with an edge $e=uv$, then 
let $X_v= \{y_{e_1}, \dots y_{e_l}\}$. In $G$ we have
$X_v\subseteq N(v)$ and $N(X_v)=\{u,v\}$, with possibly $u=v$ if $uv$ is a loop. 
Since $l\ge 2$, we have $|N(X_v)|\le |X_v|$.
Therefore, for every vertex $v$ from $A$ there exists a non-empty set $X_v$ such that  $X_v\subseteq N(v)$ and  $|N(X_v)|\leq |X_v|$, and hence by \cite{LPP} $H$ is equimatchable. 
It is easy to see that multiplying and subdividing edges does not change the genus of the graph, and thus $\gamma(G) = \gamma(H)$.
To construct the desired genus embedding $\Pi$ of $G$ with face-width $k$, start with $\Pi'$. For any edge $e=uv$ of $G'$, choose the preferred direction of $e$.
If the preferred direction of $e$ is from $u$ to $v$, then in the rotation at $u$
replace the occurrence of $e$ by $e_1\ldots e_k$ and replace the occurrence of $e^{-1}$ in the rotation at $v$ by $e_k\ldots e_1$. Finally, subdivide every edge $e_i$ by the new vertex $y_{e_i}$. Clearly, the subdivided edges $e_1,\ldots, e_k$ bound $l-1$ faces of length $4$. Moreover, the occurrence of $e=uv$ in its face boundary is replaced by a sequence of two edges $(uy_{e_1})(y_{e_1}v)$ and the occurrence of $e^{-1}$ in its faces boundary is replaced by $(vy_{e_k})(y_{e_k}u)$. 
%
It is not difficult to see that  union of any $m$ faces of $\Pi$ is union of at most $m$ faces of $\Pi'$ and hence the face-width of $\Pi$ is at least $k$.
%
Since in $\Pi'$ there is a noncontractible curve of minimum length that intersects only vertices of $G'$ (see \cite{RV:1990}), there is a homotopically equivalent noncontractible curve whose intersection with $G$ consists from precisely $k$ vertices of $G$. Thus face-width of $\Pi$ is at most $k$, which completes the proof.
%
\end{proof}

 Theorem \ref{thm:2conn-bip} and the results of \cite{KP} suggest the following open problem.

\begin{problem}
Are there infinitely-many $3$-connected bipartite equimatchable graphs embeddable in a given surface with face-width at most $2$?
\end{problem}


\section*{Acknowledgement}
Research reported in this paper was partially supported by grants APVV-0223-10 and Vega 1/1005/12.


\small
\end{document}